\let\OLDthebibliography\thebibliography
\renewcommand\thebibliography[1]{
  \OLDthebibliography{#1}
  \setlength{\parskip}{0pt}
  \setlength{\itemsep}{0pt plus 0.0ex}
}
\def\numberlikeadb{\global\def\theequation{\thesection.\arabic{equation}}}
\newtheorem{theorem}{Theorem}[section]
\newtheorem{lemma}[theorem]{Lemma}
\newtheorem{corollary}[theorem]{Corollary}
\newtheorem{remark}[theorem]{Remark}
\begin{document}

\title{New error bounds for Laplace approximation via Stein's method}
\author{Robert E. Gaunt\footnote{Department of Mathematics, The University of Manchester, Oxford Road, Manchester M13 9PL, UK}}

\date{} 
\maketitle

\vspace{-5mm}

\begin{abstract}We use Stein's method to obtain explicit bounds on the rate of convergence for the Laplace approximation of two different sums of independent random variables; one being a random sum of mean zero random variables and the other being a deterministic sum of mean zero random variables in which the normalisation sequence is random.  We make technical advances to the framework of Pike and Ren \cite{pike} for Stein's method for Laplace approximation, which allows us to give bounds in the Kolmogorov and Wasserstein metrics.  Under the additional assumption of vanishing third moments, we obtain faster convergence rates in smooth test function metrics.  As part of the derivation of our bounds for the Laplace approximation for the deterministic sum, we obtain new bounds for the solution, and its first two derivatives, of the Rayleigh Stein equation.
\end{abstract}

\noindent{{\bf{Keywords:}}} Stein's method; Laplace approximation; rate of convergence; random sums; Rayleigh distribution

\noindent{{{\bf{AMS 2010 Subject Classification:}}} Primary 60F05; 62E17

\section{Introduction}

The central limit theorem states that for a sequence of independent and identically distribution (i.i.d.) random variables, $X_1,X_2,\ldots$, with zero mean and variance $\sigma^2\in(0,\infty)$, the standardised sum $W_n=\frac{1}{\sigma\sqrt{n}}\sum_{i=1}^n X_i$ convergences in distribution to the standard normal distribution, as $n\rightarrow\infty$.  By modifying the sum $W_n$ appropriately such that either the number of terms in the sum is random or the normalisation is random we can instead naturally arrive at an asymptotic Laplace distribution.  
Studying the rate of convergence to the Laplace distribution in these two settings, via Stein's method, is the subject of this paper. 

More precisely, consider the Laplace distribution with parameters $a\in\mathbb{R}$ and $b\in(0,\infty)$ with probability density function 
\begin{equation}\label{pdfl}f_W(x)=\frac{1}{2b}\mathrm{e}^{-\frac{|x-a|}{b}}, \quad x\in\mathbb{R}.
\end{equation}
If a random variable $W$ has density (\ref{pdfl}), then we write $W\sim\mathrm{Laplace}(a,b)$. It is readily checked that $\mathbb{E}[W]=a$ and $\mathrm{Var}(W)=2b^2$. For a comprehensive account of the properties and applications of the Laplace distribution, see \cite{kkp01}.  

The first limit theorem we consider concerns geometric sums, which arise in a variety of settings \cite{k97}. Let  $X_1,X_2,\ldots$ be a sequence of i.i.d$.$ random variables with zero mean and variance $\sigma^2\in(0,\infty)$ and let $N_p\sim\mathrm{Geo}(p)$ be independent of the $X_i$ with probability mass function $P(N_p=k)=p(1-p)^{k-1}$, $k=1,2,\ldots$, $0<p<1$.  Then, with an obvious abuse of notation, 
\[S_p:=\sqrt{p}\sum_{i=1}^{N_p}X_i\rightarrow_d\mathrm{Laplace}(0,\frac{\sigma}{\sqrt{2}}),\quad p\rightarrow0.\]
  This result is proved under the stronger assumption of symmetric $X_i$ in \cite{kkp01}, whilst weaker Lindeberg-type conditions for the existence of the distributional limit are given by \cite{toda}.  

The second limit theorem considered in this paper concerns the case in which the sum $\sum_{i=1}^nX_i$ is normalised by a random variable.  Let $B_n$ be a beta random variable with parameters $1$ and $n\geq1$ and probability density function
\[f_{B_n}(x)=n(1-x)^{n-1}, \quad 0<x<1.\]
We write $B_n\sim\mathrm{Beta}(1,n)$. As in the first limit theorem, let $X_1,X_2,\ldots$ be a sequence of i.i.d$.$ random variables with zero mean and variance $\sigma^2\in(0,\infty)$.  For $n\geq2$, let $B_{n-1}\sim \mathrm{Beta}(1,n-1)$ be independent of the $X_i$.  Then, Proposition 2.2.12 of \cite{kkp01} states that
\[T_n:=B_{n-1}^{1/2}\sum_{i=1}^n X_i\rightarrow_d\mathrm{Laplace}(0,\frac{\sigma}{\sqrt{2}}),\quad n\rightarrow\infty.\]
For characterisations of the Laplace distribution involving the random variables $S_p$ and $T_n$, see \cite{k84} and \cite{pakes1, pakes2}, respectively. 

In this paper, we give explicit bounds on the distance, with respect to certain probability metrics, between the distributions of $S_p$ and $T_n$ and their limiting Laplace distributions via Stein's method, a powerful probabilistic technique that was introduced in 1972 by Charles Stein \cite{stein} for normal approximation.  For a given target
distribution $q$, the first step in Stein's method is to find a
suitable operator $\mathcal{A}$ acting on a class of functions $\mathcal{F}$
such that $\mathbb{E} [\mathcal{A}f(Y)] =0$ for all $f\in\mathcal{F}$ if and only
if the random variable $Y$ has distribution $q$.  For the $N(\mu,\sigma^2)$ distribution, the
classical \emph{Stein operator} is $\mathcal{A}f(x)=\sigma^2f'(x)-(x-\mu)f(x)$.  This leads
to the \emph{Stein equation}
\begin{equation}\label{steineqn}\mathcal{A}f_h(x)=h(x)-\mathbb{E} [h(Y)],
\end{equation}
where the test function $h$ is real-valued.  The second step is to solve (\ref{steineqn}) for $f_h$ (for which we require $f_h\in\mathcal{F}$) and obtain suitable bounds for the solution. Finally, to approximate the distribution of a random variable of interest $W$ by the target distribution $q$, one
may evaluate both sides of (\ref{steineqn}) at $W$, take expectations, absolute values, and suprema of both sides over a class of functions $\mathcal{H}$ to obtain 
\begin{equation*}d_{\mathcal{H}}(W,Y):=\sup_{h\in\mathcal{H}}|\mathbb{E}[h(W)]-\mathbb{E}[h(Y)]|=\sup_{h\in\mathcal{H}}|\mathbb{E}[\mathcal{A}f_h(W)]|.
\end{equation*}
This is of interest because many important probability metrics are of the form $d_{\mathcal{H}}(W,Y)$, and in many settings bounding the expectation $\mathbb{E}[Af_h(W)]$ is relatively tractable.  In particular, taking
\begin{align*}\mathcal{H}_{\mathrm{K}}&=\{\mathbf{1}(\cdot\leq z)\,|\,z\in\mathbb{R}\}, \\
\mathcal{H}_{\mathrm{W}}&=\{h:\mathbb{R}\rightarrow\mathbb{R}\,|\,\text{$h$ is Lipschitz, $\|h'\|\leq1$}\}, \\
\mathcal{H}_{\mathrm{BW}}&=\{h:\mathbb{R}\rightarrow\mathbb{R}\,|\,\text{$h$ is Lipschitz, $\|h\|\leq1$ and $\|h'\|\leq1$}\}, \\
\mathcal{H}_{2}&=\{h:\mathbb{R}\rightarrow\mathbb{R}\,|\,\text{$h'$ is Lipschitz, $\|h''\|\leq1$}\}, \\
\mathcal{H}_{1,2}&=\{h:\mathbb{R}\rightarrow\mathbb{R}\,|\,\text{$h'$ is Lipschitz, $\|h'\|\leq1$ and $\|h''\|\leq1$}\}
\end{align*}
gives the Kolmogorov, Wasserstein and bounded Wasserstein distances, which we denote by $d_{\mathrm{K}}$, $d_{\mathrm{W}}$ and $d_{\mathrm{BW}}$, respectively, as well as two smooth test function metrics, which we denote by $d_2$ and $d_{1,2}$, respectively. (Here and throughout the paper $\|g\|:=\|g\|_\infty=\sup_{x\in\mathbb{R}}|g(x)|$.)  The $d_2$ and $d_{1,2}$ and similar smooth test function metrics are often found in applications of Stein's method in which `fast' convergence rates are sought, see, for example, \cite{bj2,f18,gaunt chi square,goldstein4}.  


Stein's method was adapted to the Laplace distribution by \cite{pike} (a number of their contributions are outlined in Section \ref{sec2}), and as an application they derived an explicit bound on the bounded Wasserstein distance between the distribution of $S_p$ and its limiting Laplace distribution.  Their approach, which involves the introduction of the so-called \emph{centered equilibrium transformation} for Laplace approximation, mirrored that of \cite{pekoz1}, who used Stein's method for exponential approximation to give explicit bounds on the rate of convergence in a generalisation of a well-known result of R\'{e}nyi \cite{renyi} concerning the convergence of geometric sums of positive random variables to the exponential distribution.  In this paper, we make technical improvements on the work of \cite{pike} (through Lemma \ref{lemivp} and Theorem \ref{jazzz}) that allow for their framework of Laplace approximation by Stein's method to yield optimal order Kolmogorov and Wasserstein distance bounds, as well as faster convergence rates in the $d_2$ distance.  As an application we are able to obtain the following theorem.


\begin{theorem}\label{thm555}Suppose $X_1,X_2,\ldots$ is a sequence of independent random variables with $\mathbb{E}[X_i]=0$ and $\mathbb{E}[X_i^2]=\sigma^2\in(0,\infty)$. Let $N_p\sim\mathrm{Geo}(p)$, $0<p<1$, be independent of the $X_i$.  Define $S_p=\sqrt{p}\sum_{i=1}^{N_p} X_i$ and let $Z\sim \mathrm{Laplace}(0,\frac{\sigma}{\sqrt{2}})$.  Then
\begin{equation}\label{wedfg}d_{\mathrm{K}}(S_p, Z)\leq \sqrt{2}\bigg(\frac{7}{2}+\sqrt{10}\bigg)\frac{\sqrt{p}}{\sigma}\sup_{i\geq1}\|F_{X_i}^{-1}-F_{X_i^L}^{-1}\|.
\end{equation}
Suppose additionally that $\rho_3=\sup_{i\geq1}\mathbb{E}[|X_i|^3]<\infty$. Then
\begin{equation}\label{rwrwa}d_{\mathrm{W}}(S_p,Z)\leq 2\sigma\sqrt{p}\bigg(1+\frac{\rho_3}{3\sigma^3}\bigg).
\end{equation}
Let $k\geq 1$. Suppose that $\rho_{k+2}=\sup_{i\geq1}\mathbb{E}[|X_i|^{k+2}]<\infty$.  Then
\begin{equation}\label{taubound}d_{\mathrm{K}}(S_p, Z)\leq 11.56\cdot2^{\frac{k-1}{k+1}}(2p)^{\frac{k}{2(k+1)}}\bigg(\frac{\rho_k}{\sigma^k}+\frac{2\rho_{k+2}}{(k+1)(k+2)\sigma^{k+2}}\bigg)^{\frac{1}{(k+1)}}.
\end{equation}
Finally, suppose that $X_1,X_2,\ldots$ are identically distributed and that $\mathbb{E}[X_1^3]=0$, $\mathbb{E}[X_1^4]<\infty$.
 Then
\begin{align}\label{on11}d_2(S_p,Z)\leq \sigma^2p\bigg[\frac{2-p}{1-p}+\frac{\mathbb{E}[X_1^4]}{6\sigma^4}+\frac{\sqrt{p}\log(1/p)}{\sqrt{2}(1-p)}\bigg(2+\frac{\mathbb{E}[|X_1|^3]}{\sigma^3}\bigg)\bigg].
\end{align}
\end{theorem}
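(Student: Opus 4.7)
The overall plan is to apply the Pike--Ren Stein's method framework for Laplace approximation, equipped with the sharper Stein-equation solution bounds announced as Lemma \ref{lemivp} and Theorem \ref{jazzz}. Writing $d_{\mathcal{H}}(S_p,Z)=\sup_{h\in\mathcal{H}}|\mathbb{E}[\mathcal{A}f_h(S_p)]|$, I would introduce the centered equilibrium transformation $X^L$ of a mean-zero, variance-$\sigma^2$ variable $X$, which by design converts $\mathbb{E}[\mathcal{A}f(S_p)]$ into a difference involving the distribution of $(S_p)^L$. For the geometric random sum $S_p$, the memoryless property of $N_p$ produces a tractable coupling of $(S_p)^L$ with $S_p$ on the same probability space: the two random variables can be made to differ in only a single summand, whose $X_I$ is replaced by an independent $X_I^L$. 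Each bound therefore reduces to controlling an expectation of the form $\mathbb{E}[g(S_p+\sqrt{p}(X_I^L-X_I))-g(S_p)]$ for a function $g$ tied to derivatives of the Stein solution $f_h$.

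For the Kolmogorov bound \eqref{wedfg}, $h$ is an indicator and $g$ corresponds to $f_h''$. A monotone (quantile) coupling of $X_I$ and $X_I^L$ bounds the displacement by $\sqrt{p}\,\|F_{X_I}^{-1}-F_{X_I^L}^{-1}\|$, and the sharpened solution estimate from Theorem \ref{jazzz} yields the explicit constant $\sqrt{2}(7/2+\sqrt{10})$. For the Wasserstein bound \eqref{rwrwa}, $h$ is Lipschitz, which gives a smaller regularity norm of $g$; a first-order Taylor expansion of $g$ around $S_p$ produces the $2\sigma\sqrt{p}$ leading contribution, and a $\rho_3/(3\sigma^3)$ correction from the second-order remainder together with $\mathbb{E}[|X|^3]$.

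For \eqref{taubound} I would apply a Bikelis-type smoothing inequality: approximate the indicator by a smoothed test function with parameter $\epsilon$, apply the previous analysis to the smoothed part (where a remainder of order $\rho_{k+2}\epsilon^{-k}$ enters through a higher-order Taylor expansion of $f_h$, with $\rho_k$ arising in the leading term), and incur a separate error of order $\sqrt{p}/\epsilon$ from the smoothing. Optimising in $\epsilon$ gives the balance $\epsilon\sim p^{1/(2(k+1))}$ and hence the stated exponent $k/(2(k+1))$.

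Finally, for \eqref{on11}, the vanishing third moment $\mathbb{E}[X_1^3]=0$ permits a third-order Taylor expansion of $f_h$ around $S_p$, so that the leading error involves $\mathbb{E}[X_1^4]$ and $\|f_h'''\|$. The relevant bound on $\|f_h'''\|$ from Lemma \ref{lemivp} has a logarithmic singularity near zero, which is the source of the $\log(1/p)$ factor in the bound. The main obstacle will be \eqref{on11}: assembling the third-order Taylor expansion, handling the logarithmic Stein-solution bound, and simultaneously tracking the explicit constants demands careful bookkeeping of remainder terms. By contrast, once the solution bounds of Lemma \ref{lemivp} and Theorem \ref{jazzz} are in hand, the other three bounds are relatively routine applications of the centered equilibrium coupling for geometric sums.
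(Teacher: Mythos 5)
Your high-level framework is the right one (centered equilibrium transformation, a coupling of $S_p$ with $S_p^L$ that differs only in the last summand, quantile/Strassen coupling for the Kolmogorov bound), and the constant $\sqrt{2}(7/2+\sqrt{10})$ does indeed come out of Theorem \ref{jazzz} via Theorem \ref{thmnbc}. However, your mechanism for \eqref{on11} is incorrect, and your account of where the higher moments enter \eqref{taubound} is also off.

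Regarding \eqref{on11}: you attribute the $\log(1/p)$ to a ``logarithmic singularity near zero'' in the bound on $\|f_h'''\|$ from Lemma \ref{lemivp}. This is not the case: the Stein-solution bounds in Lemma \ref{lemivp} are all flat, e.g.\ $\|f^{(k+3)}\|\leq (2/b^2)\|h^{(k+1)}\|$, with no singular behaviour. The logarithm actually arises from the geometric random index. The paper applies the bound \eqref{ordern}, $d_2(W,Z)\leq b\,\mathbb{E}[|\mathbb{E}[W-W^L\,|\,W]|]+\mathbb{E}[(W-W^L)^2]$ (a \emph{second}-order, not third-order, Taylor expansion of $f''$, with the first-order term retained and conditioned on $W$). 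Vanishing third moments give $\mathbb{E}[X_N^L]=0$, and then exchangeability yields $\mathbb{E}[S-S^L\,|\,S]=\mathbb{E}[S/N\,|\,S]$. Bounding this requires a normal approximation in Wasserstein distance for $n^{-1/2}\sum_{i\leq n}X_i$ (inequality \eqref{minrev}), giving terms proportional to $\mathbb{E}[N_p^{-1/2}]$ and $\mathbb{E}[N_p^{-1}]$; the second expectation equals $p\log(1/p)/(1-p)$, and that is the source of the $\log(1/p)$. Without this step your proposal has no route to the stated bound.

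Regarding \eqref{taubound}: the $\rho_k$ and $\rho_{k+2}$ do not enter through ``a higher-order Taylor expansion of $f_h$.'' They come from the moment identity \eqref{moml}: $\mathbb{E}[|X^L|^k]=\mathbb{E}[|X|^{k+2}]/((k+1)(k+2)b^2)$, used after bounding $\mathbb{E}[|S-S^L|^k]\leq 2^{k-1}p^{k/2}(\mathbb{E}|X_N|^k+\mathbb{E}|X_N^L|^k)$. The paper then applies Markov's inequality to $\mathbb{P}(|W-W^L|>\beta)$ in \eqref{dfgh1} and optimises over $\beta$ (Corollary \ref{corsec2}); this is morally a smoothing argument (since \eqref{dfgh1} itself is established by smoothing), so your instinct about the balancing is in the right direction, but the mechanism you describe for the remainder term would not produce the stated bound.

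Your sketch for \eqref{rwrwa} describes a Taylor expansion of $g$ around $S_p$; the paper's argument is more elementary, bounding $\mathbb{E}|X_N-X_N^L|\leq\sigma+\rho_3/(3\sigma^2)$ directly via the triangle inequality, Cauchy--Schwarz, and \eqref{moml}, and substituting into \eqref{wedf}. Your route likely reaches the same order but would need more bookkeeping to hit the stated constant.
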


\begin{remark}\label{rety}
The dependence on $p$ in (\ref{taubound}) is worse than in (\ref{wedfg}), but the bound may be preferable if $\sup_{i\geq1}\|F_{X_i}^{-1}-F_{X_i^L}^{-1}\|$ is difficult to compute or large.  Note, though, that as $k$ increases the exponent $\frac{k}{2(k+1)}$ of $p$ in (\ref{taubound}) approaches the exponent $\frac{1}{2}$ of (\ref{wedfg}).

\end{remark}

We are also able to obtain a similar theorem for the deterministic sum $T_n$:

\begin{theorem}\label{thm888}Let $n\geq2$ and suppose that $X_1,\ldots,X_n$ are independent random variables with $\mathbb{E}[X_i]=0$, $\mathbb{E}[X_i^2]=\sigma^2\in(0,\infty)$ and $\mathbb{E}[|X_i|^3]<\infty$, for all $1\leq i\leq n$.  Then
 \begin{equation*}d_{\mathrm{K}}(T_n,Z)\leq\frac{0.5600}{\sigma^3n^{3/2}}\sum_{i=1}^n\mathbb{E}[|X_i|^3]+\frac{1}{n}\bigg(1+2\bigg(1-\frac{2}{n}\bigg)^{n-2}\bigg).
\end{equation*}
and
\begin{equation*}d_{\mathrm{W}}(T_n,Z)\leq\frac{2\sqrt{2}\sigma}{3n^{3/2}}\sum_{i=1}^n\bigg(2+\frac{\mathbb{E}[|X_i|^3]}{\sigma^3}\bigg)+\frac{9.168\sigma}{n}.
\end{equation*}
In addition to the above assumptions, suppose that $\mathbb{E}[X_i^3]=0$ and $\mathbb{E}[X_i^4]<\infty$, for all $1\leq i\leq n$. Then
\begin{equation*}d_{1,2}(T_n,Z)\leq\frac{\sigma^2}{n^2}\sum_{i=1}^n\bigg(1+\frac{\mathbb{E}[X_i^4]}{3\sigma^4}\bigg)+\frac{9.168\sigma}{n}.
\end{equation*}
\end{theorem}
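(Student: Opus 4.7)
The proof exploits the product representation $\mathrm{Laplace}(0,\sigma/\sqrt 2)\stackrel{d}{=}\sigma R N$, where $R$ satisfies $R^2\sim\mathrm{Exp}(1)$ and $N\sim N(0,1)$ is independent of $R$ (easily verified via characteristic functions). Correspondingly, write $T_n=\sigma\sqrt{nB_{n-1}}\,W_n$ with $W_n:=\frac{1}{\sigma\sqrt n}\sum_{i=1}^n X_i$ independent of $B_{n-1}$, and introduce the auxiliary variable $V_n:=\sigma\sqrt{nB_{n-1}}\,N$ with $N\sim N(0,1)$ independent of $B_{n-1}$. Then $V_n$ shares the Rayleigh-like factor $\sqrt{nB_{n-1}}$ with $T_n$ and the normal factor with $Z$, so the triangle inequality yields
$$d(T_n,Z)\le d(T_n,V_n)+d(V_n,Z).$$

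To bound $d(T_n,V_n)$ I would condition on $B_{n-1}=b$. Since $W_n$ is independent of $B_{n-1}$, and the map $y\mapsto\sigma\sqrt{nb}\,y$ preserves indicators (for $d_\mathrm{K}$) while merely scaling the Lipschitz and second-derivative constants of $h$ (for the smooth metrics), this step reduces to a CLT estimate for $W_n$ versus $N$: Shevtsova's Berry--Esseen bound with constant $0.5600$ in the Kolmogorov case, a Stein--method Wasserstein bound in the Wasserstein case, and a smooth-metric bound of order $n^{-1}$ (available under the vanishing-third-moment assumption) in the $d_{1,2}$ case. Integrating the scaling factor against $B_{n-1}$ (using $\mathbb{E}[\sqrt{nB_{n-1}}]\le 1$ by Jensen's inequality, or the exact identity $\mathbb{E}[nB_{n-1}]=n/(n-1+1)=1$) then yields the first summand of each bound in the statement.

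For $d(V_n,Z)$ I would set $\tilde h(r):=\mathbb{E}_N[h(\sigma r N)]$, so that $\mathbb{E}[h(V_n)]-\mathbb{E}[h(Z)]=\mathbb{E}[\tilde h(R_n)]-\mathbb{E}[\tilde h(R)]$, where $R_n:=\sqrt{nB_{n-1}}$. This reduces the problem to a Rayleigh approximation of $R_n$: I would solve the Rayleigh Stein equation $\mathcal{L}f=\tilde h-\mathbb{E}\tilde h(R)$, apply the new bounds on $\|f\|$, $\|f'\|$ and $\|f''\|$ promised in the abstract, and evaluate $\mathbb{E}[\mathcal{L}f(R_n)]$ against the explicit density of $R_n$ induced by the $\mathrm{Beta}(1,n-1)$ law of $B_{n-1}$. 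Integration-by-parts manipulations should produce terms of order $1/n$ together with boundary-type remainders such as $(1-2/n)^{n-2}$, matching the explicit constants $\tfrac{1}{n}(1+2(1-2/n)^{n-2})$ and $9.168\,\sigma/n$ of the statement.

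The principal obstacle is the transfer of regularity from $h$ to $\tilde h$. In the Kolmogorov case $h$ is merely an indicator, so the necessary bounds on $\tilde h'$ and $\tilde h''$ must be extracted entirely from the Gaussian smoothing in the definition of $\tilde h$; moreover, these bounds typically carry inverse powers of $r$ that must be reconciled with the $1/r$ singularity of the Rayleigh Stein operator and the vanishing of the Rayleigh density near zero. Matching these estimates quantitatively to the new Rayleigh Stein-equation bounds, so as to obtain the explicit numerical constants of the theorem, is the delicate part of the argument.
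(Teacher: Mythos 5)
Your global decomposition coincides with the paper's: triangle inequality through the intermediate product $U_nV$, with $U_n=\sqrt{nB_{n-1}}$ and $V\sim N(0,\sigma^2)$, which is exactly what Lemma \ref{uvuv} encodes; and the CLT piece $d(T_n,U_nV)$ is handled as you describe, by conditioning on $U_n$, rescaling, and invoking Theorems \ref{thmapk}--\ref{thmap2}. The only cosmetic difference there is that the paper computes the moment factors exactly, $\mathbb{E}[U_n]\le 2\sqrt{2}/3$ and $\mathbb{E}[U_n^2]=1$ (via monotonicity of gamma ratios), rather than the weaker Jensen estimate $\mathbb{E}[U_n]\le 1$ you propose; your version loses a little in the constant but nothing in the rate.

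Where you diverge is the Rayleigh piece, and there your route is both more convoluted than the paper's and built on a misdirected worry. You propose averaging out the normal and working with $\tilde h(r)=\mathbb{E}_N[h(\sigma rN)]$; the paper instead \emph{conditions} on $V$ inside Lemma \ref{uvuv}, so the test function passed to the Rayleigh comparison is $y\mapsto h(vy)$. Under $d_{\mathrm{K}}$ this remains an indicator, giving $d_{\mathrm{K}}(U_nV,UV)\le d_{\mathrm{K}}(U_n,U)$ with no loss, and under $d_{\mathrm{W}}$ and $d_{1,2}$ it merely costs the factor $\mathbb{E}|V|=\sigma\sqrt{2/\pi}$ (whence $9.168=\sqrt{2/\pi}\times 11.49$). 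Your ``principal obstacle'' --- extracting bounds on $\tilde h'$ and $\tilde h''$ from Gaussian smoothing in the Kolmogorov case --- is therefore not an issue in the paper's argument, and is in any event the wrong concern: for $d_{\mathrm{K}}$ one applies the Rayleigh Stein-equation bounds for \emph{bounded} test functions (inequalities (\ref{bound1})--(\ref{bound2}) of Lemma \ref{propapp3}), which require no regularity of $\tilde h$ beyond $\|\tilde h\|\le 1$. Finally, for $d(U_n,U)$ itself the paper does not integrate by parts against the Beta density; it compares Stein operators. The operator $\mathcal{A}_{U_n}$ of $U_n$ (Lemma \ref{lembet}) differs from the Rayleigh operator $\mathcal{A}_U$ (Lemma \ref{lemray}) by exactly the term $\frac{1}{n}x^3f'(x)$, so $|\mathbb{E}[h(U_n)]-\mathbb{E}[h(U)]|=\frac{1}{n}|\mathbb{E}[U_n^3f'(U_n)]|$, and the bounds on $\|xf'(x)\|$ and $\|f'\|$ from Lemma \ref{propapp3} then give the $2/n$ and $11.49/n$ estimates directly; this is considerably cleaner than a density computation. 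Note also that the sharp term $\tfrac{1}{n}\bigl(1+2(1-\tfrac{2}{n})^{n-2}\bigr)$ is not proved in this paper --- it is imported from Ernst and Swan (see Remark \ref{yvik}) --- so your integration-by-parts sketch of it remains speculative.
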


Written in the notation of Theorem \ref{thm555}, the bounded Wasserstein distance bound of \cite{pike} reads $d_{\mathrm{BW}}(S_p,Z)\leq\sigma\sqrt{p}\big(1+\frac{2\sqrt{2}}{\sigma}\big)\big(1+\frac{\rho_3}{3\sigma^3}\big)$. We see that in addition to being given in a stronger metric, the Wasserstein distance bound (\ref{rwrwa}) of Theorem \ref{thm555} has a better dependence on $\sigma$ (the bound of \cite{pike} has an extra factor of $\big(1+\frac{2\sqrt{2}}{\sigma}\big)$ meaning that the bound has a worse dependence on $\sigma$ if $\sigma$ is `small') and a smaller numerical constant if $\sigma<2\sqrt{2}$ (the bound of \cite{pike} has the smaller numerical constant if $\sigma>2\sqrt{2}$).
The bound (\ref{rwrwa}) also improves on the recent Wasserstein distance bound given in Theorem 5.10 of \cite{gaunt vgii}, in which Laplace approximations were obtained as part of a more general work on variance-gamma approximation.  By working in a specialist Laplace framework, it is no surprise that we outperform the results of \cite{gaunt vgii}, and our Kolmogorov distance bound (\ref{wedfg}) is also an improvement on the analogous bound in Theorem 5.10 of that work.  The $O(p)$ bound (\ref{on11}) is the first faster than $O(p^{1/2})$ bound for the random sum $S_p$ in the literature.  The faster convergence rate is a result of the vanishing third moment assumption, and as such complements a number of other `matching moments' limit theorems that are found in the Stein's method literature, see, for example, \cite{daly, f18, gaunt rate, gaunt fon, goldstein, lefevre}.  Theorem \ref{thm888} gives the first bounds in the literature on the rate of convergence of the deterministic sum $T_n$ to its asymptotic Laplace distribution.  Again, under the assumption of vanishing third moments, we obtain a faster convergence rate.  As part of our proof of the theorem, we obtain the first bounds in the literature for the solution, and its first two derivatives, of the Rayleigh Stein equation, which may be useful in future applications.

The rest of the paper is organised as follows.  In Section \ref{sec2}, we obtain new bounds for the solution of the Laplace Stein equation (Lemma \ref{lemivp}) and give general bounds for Laplace approximation involving the centered equilibrium distribution (Theorem \ref{jazzz}).  In Sections \ref{sec3} and \ref{sec4}, we prove Theorems \ref{thm555} and \ref{thm888}, respectively.  In Section \ref{sec5}, we obtain new bounds for the solution of the Rayleigh Stein equation that are used in the proof of Theorem \ref{thm888}.

\section{Stein's method for the Laplace distribution}\label{sec2}


In this section, we recall some of the theory developed by \cite{pike} for Stein's method for Laplace approximation and make some technical improvements that allow their framework for Laplace approximation to be applied in the Kolmogorov and Wasserstein metrics, as well as the $d_2$ metric when faster convergence rates are sought.  We begin by recalling the following characterisation of the Laplace distribution \cite[Theorem 1.1]{pike}.

Let $W$ be a real-valued random variable. Then $W$ follows the $\mathrm{Laplace}(0,b)$ distribution if and only if
\begin{equation}\label{alapchar} \mathbb{E}\big[b^2f''(W)-f(W)+f(0)\big]=0
\end{equation}
for all $f:\mathbb{R}\rightarrow\mathbb{R}$ such that $f$ and $f'$ are locally absolutely continuous and $\mathbb{E}|f'(Z)|<\infty$ and $\mathbb{E}|f''(Z)|<\infty$, for $Z\sim\mathrm{Laplace}(0,b)$.  Based on this characterisation, \cite{pike} were led to the initial value problem
\begin{equation}\label{ivp}b^2f''(x)-f(x)=\tilde{h}(x), \quad f(0)=0,
\end{equation}
where $\tilde{h}(x)=h(x)-\mathbb{E}[h(Z)]$, $Z\sim\mathrm{Laplace}(0,b)$.

At this point it is worth noting that an alternative Stein equation for the $\mathrm{Laplace}(0,b)$ distribution is given by
\begin{equation}\label{ivp22}b^2xf''(x)+2b^2f'(x)-xf(x)=\tilde{h}(x),
\end{equation}
which is a special case of the variance-gamma Stein equation of \cite{gaunt vg} (it is noted in Proposition 1.2 of \cite{gaunt vg} that the Laplace distribution is a special case of the variance-gamma distribution).  A framework for variance-gamma approximation by Stein's method in the Kolmogorov and Wasserstein metrics was developed by \cite{gaunt vgii}, and a special case of this general framework gives a framework for Laplace approximation.  However, the Stein equation (\ref{ivp22}) is more difficult to work with than (\ref{ivp}) and it is therefore not surprising that all the comparable results for Laplace approximation obtained in this paper outperform those of \cite{gaunt vgii}.  We also remark that another Stein characterisation of the Laplace distribution is given by \cite{ah19}, as a special case of a general characterisation concerning infinitely divisible distributions, although the quantitative limit theorems derived in their work are quite different to ours.

Let us now focus on the initial value problem (\ref{ivp}).  The solution
\begin{align}\label{ff9os}f(x)=\frac{1}{2b}\bigg(&\mathrm{e}^{x/b}\int_x^\infty \mathrm{e}^{-t/b}\tilde{h}(t)\,\mathrm{d}t+\mathrm{e}^{-x/b}\int_{-\infty}^x \mathrm{e}^{t/b}\tilde{h}(t)\,\mathrm{d}t\bigg)
\end{align}
was obtained by \cite{pike}, as well as bounds for $f$ and its first three derivatives.  In the following lemma, we improve on Lemma 2.2 of \cite{pike} by obtaining bounds for $f$ and its derivatives (of arbitrary order) that have smaller constants and hold for a larger class of functions.  The latter improvement is crucial in enabling us to later obtain Kolmogorov and Wasserstein distance bounds for Laplace approximation. 

\begin{lemma}\label{lemivp}Let $h:\mathbb{R}\rightarrow\mathbb{R}$ be a measurable function with $\mathbb{E}|h(Z)|<\infty$, where $Z\sim\mathrm{Laplace}(0,b)$.  Let $f$ be the solution (\ref{ff9os}) to the Stein equation (\ref{ivp}).
If $h$ is bounded, then this is the unique bounded solution to (\ref{ivp}).  Moreover, the solution $f$ and its first two derivatives satisfy the bounds
\begin{equation}\label{firstbounds}\|f\|\leq\|\tilde{h}\|, \quad \|f'\|\leq\frac{1}{b}\|\tilde{h}\|, \quad \|f''\|\leq\frac{2}{b^2}\|\tilde{h}\|.
\end{equation}
Suppose that $h$ is Lipschitz.  Then
\begin{equation*}|f(x)|\leq(2b+|x|)\|h'\|, \quad x\in\mathbb{R},
\end{equation*}
Now suppose that $h^{(k)}$ is Lipschitz, where $h^{(0)}\equiv h$.  Then, for $k\geq0$,
\begin{equation}\label{lipbounds}\|f^{(k+1)}\|\leq\|h^{(k+1)}\|, \quad \|f^{(k+2)}\|\leq\frac{1}{b}\|h^{(k+1)}\|, \quad \|f^{(k+3)}\|\leq\frac{2}{b^2}\|h^{(k+1)}\|.
\end{equation}
\end{lemma}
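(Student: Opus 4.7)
The plan centres on recognising a clean probabilistic reformulation of (\ref{ff9os}). Splitting the two integrals in (\ref{ff9os}) according to the sign of $t-x$, the kernel that appears is precisely $\frac{1}{2b}\mathrm{e}^{-|x-t|/b}$, so $f(x) = \mathbb{E}[h(x+Z)] - \mathbb{E}[h(Z)]$ with $Z \sim \mathrm{Laplace}(0,b)$. This representation is the engine behind essentially every estimate in the lemma.

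To verify that $f$ solves the ODE, I would set $A(x) = \mathrm{e}^{x/b}\int_x^\infty \mathrm{e}^{-t/b}\tilde h(t)\,\mathrm{d}t$ and $B(x) = \mathrm{e}^{-x/b}\int_{-\infty}^x \mathrm{e}^{t/b}\tilde h(t)\,\mathrm{d}t$, check that $A' = A/b - \tilde h$ and $B' = -B/b + \tilde h$, and differentiate $f = (A+B)/(2b)$ twice to recover (\ref{ivp}). The initial condition $f(0) = 0$ drops out of the identity $\frac{1}{2b}\int_{\mathbb{R}} \mathrm{e}^{-|t|/b}\tilde h(t)\,\mathrm{d}t = \mathbb{E}[\tilde h(Z)] = 0$. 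Uniqueness of the bounded solution is then immediate because the homogeneous solutions $\mathrm{e}^{\pm x/b}$ are unbounded, so any two bounded solutions of (\ref{ivp}) must coincide.

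For the bounded-$h$ estimates in (\ref{firstbounds}), I would bound the two integrals in (\ref{ff9os}) directly by $\|\tilde h\|$ to obtain $\|f\| \leq \|\tilde h\|$, similarly from $f' = (A - B)/(2b^2)$ obtain $\|f'\| \leq \|\tilde h\|/b$, and conclude $\|f''\| \leq 2\|\tilde h\|/b^2$ straight from the ODE. For the Lipschitz bound $|f(x)| \leq (2b + |x|)\|h'\|$, I would introduce an independent copy $Z'$ of $Z$ so that $f(x) = \mathbb{E}[h(x+Z) - h(Z')]$, giving $|f(x)| \leq \|h'\|\,\mathbb{E}|x + Z - Z'| \leq \|h'\|(|x| + 2\mathbb{E}|Z|) = (2b + |x|)\|h'\|$.

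For the iterated bounds (\ref{lipbounds}), under the hypothesis that $h^{(k)}$ is Lipschitz (so $h^{(k+1)}$ exists almost everywhere and is essentially bounded by $\|h^{(k+1)}\|$), differentiating the convolution representation under the expectation $j$ times, for $1 \leq j \leq k+1$, yields $f^{(j)}(x) = \mathbb{E}[h^{(j)}(x+Z)]$, hence $\|f^{(j)}\| \leq \|h^{(j)}\|$; the case $j = k+1$ is the first bound in (\ref{lipbounds}). To get the middle bound I would write $f^{(k+1)}(x) = \int_{\mathbb{R}} h^{(k+1)}(t)\frac{1}{2b}\mathrm{e}^{-|t-x|/b}\,\mathrm{d}t$ and transfer one more $x$-derivative onto the Laplace kernel, picking up a factor of $\frac{1}{b}\mathrm{sgn}(t-x)$, so that $|f^{(k+2)}(x)| \leq \|h^{(k+1)}\|/b$. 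The third bound then comes for free by differentiating the Stein equation $k+1$ times to obtain $b^2 f^{(k+3)} - f^{(k+1)} = h^{(k+1)}$, whence $\|f^{(k+3)}\| \leq 2\|h^{(k+1)}\|/b^2$. The main technical subtlety I foresee is justifying differentiation under the integral when $h^{(k)}$ is only Lipschitz rather than $C^{k+1}$, which is handled by applying dominated convergence to the difference quotients, using that $h^{(k+1)}$ is uniformly bounded almost everywhere by the Lipschitz constant.
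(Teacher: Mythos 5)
Your proof is correct and, at its core, uses the same engine as the paper's --- bound the explicit integral representation of $f$ and its derivatives, and propagate Lipschitz bounds by transferring derivatives onto the test function --- but you package it more cleanly via the convolution identity $f(x)=\mathbb{E}[\tilde h(x+Z)]$, which is worth a brief comparison. For the non-uniform bound $|f(x)|\leq(2b+|x|)\|h'\|$, the paper establishes two explicit exponential-integral inequalities and plugs them in, whereas you introduce an independent copy $Z'$ of $Z$ and write $f(x)=\mathbb{E}[h(x+Z)-h(Z')]$, so that the mean value theorem gives $|f(x)|\leq\|h'\|\,\mathbb{E}|x+Z-Z'|\leq\|h'\|(|x|+2\mathbb{E}|Z|)$; this is genuinely shorter and avoids the case analysis in the integral estimates. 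For the iterated bounds \eqref{lipbounds}, the paper shows by integration by parts that $f'$ admits the representation \eqref{ff9os} with $\tilde h$ replaced by $h'$ and then iterates; your statement $f^{(j)}(x)=\mathbb{E}[h^{(j)}(x+Z)]$, with one further derivative landing on the kernel to produce the $1/b$ factor and the ODE handling the top order, is an equivalent observation with the probabilistic interpretation made explicit. Two small points of precision: when you say you would ``bound the two integrals in \eqref{ff9os} directly by $\|\tilde h\|$'' you really mean each bracketed term is bounded by $b\|\tilde h\|$ (or, more cleanly, just read off $|f(x)|=|\mathbb{E}[\tilde h(x+Z)]|\leq\|\tilde h\|$ from your own representation); and the sign you obtain when verifying $b^2f''-f=\pm\tilde h$ from $A'=A/b-\tilde h$, $B'=-B/b+\tilde h$ actually comes out as $-\tilde h$, a sign discrepancy already present in the source \cite{pike} and harmless for every bound in the lemma since all the estimates are on absolute values. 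Your remark about dominated convergence for the difference quotients when $h^{(k)}$ is merely Lipschitz is exactly the right caveat and is what makes the whole scheme rigorous.
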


\begin{proof} It is easily verified that there is at most one bounded solution  to (\ref{ivp}).  Suppose that $u$ and $v$ are solutions to (\ref{ivp}).  Then $w=u-v$ satisfies $w(0)=0$ and solves the differential equation $b^2w''(x)-w(x)=0$, the general solution to which is given by  $w(x)=A\mathrm{e}^{x/b}+B\mathrm{e}^{-x/b}$.  For $w(x)$ to be bounded for all $x\in\mathbb{R}$, we must take $A=B=0$, from which we conclude that $w=0$, so that $u=v$.

Now we establish the bounds in (\ref{firstbounds}).  Suppose $h$ is bounded.   
 We first note that, for all $x\in\mathbb{R}$,
\begin{equation*}\bigg|\mathrm{e}^{x/b}\int_x^\infty \mathrm{e}^{-t/b}\tilde{h}(t)\,\mathrm{d}t\bigg|\leq \|\tilde{h}\|\mathrm{e}^{x/b}\int_x^\infty \mathrm{e}^{-t/b}\,\mathrm{d}t=b\|\tilde{h}\|,
\end{equation*}
and
\begin{equation*}\bigg|\mathrm{e}^{-x/b}\int_{-\infty}^x\mathrm{e}^{t/b}\tilde{h}(t)\,\mathrm{d}t\bigg|\leq \|\tilde{h}\|\mathrm{e}^{-x/b}\int_{-\infty}^x \mathrm{e}^{t/b}\,\mathrm{d}t=b\|\tilde{h}\|.
\end{equation*}
Applying these inequalities into (\ref{ff9os}) gives the bound
\begin{equation}\label{fequal}\|f\|\leq\frac{1}{2b}\big(b\|\tilde{h}\|+b\|\tilde{h}\|\big)=\|\tilde{h}\|.
\end{equation}
Differentiating both sides of (\ref{ff9os}) gives that
\begin{equation}\label{firstd}f'(x)=\frac{1}{2b}\bigg(\frac{1}{b}\mathrm{e}^{x/b}\int_x^\infty\mathrm{e}^{-t/b}\tilde{h}(t)\,\mathrm{d}t-\frac{1}{b}\mathrm{e}^{-x/b}\int_{-\infty}^x\mathrm{e}^{t/b}\tilde{h}(t)\,\mathrm{d}t\bigg),
\end{equation}
and so
\begin{equation*}\|f'\|\leq\frac{1}{2b}\big(\|\tilde{h}\|+\|\tilde{h}\|\big)=\frac{1}{b}\|\tilde{h}\|.
\end{equation*}
From (\ref{ivp}) and formula (\ref{ff9os}) we have that, for all $x\in\mathbb{R}$,
\begin{align*}|f''(x)|&=\frac{1}{b^2}|\tilde{h}(x)+f(x)| \\
&=\bigg|\frac{1}{b^2}\tilde{h}(x)+\frac{1}{2b^3}\bigg(\mathrm{e}^{x/b}\int_x^\infty\mathrm{e}^{-t/b}\tilde{h}(t)\,\mathrm{d}t+\mathrm{e}^{-x/b}\int_{-\infty}^x\mathrm{e}^{t/b}\tilde{h}(t)\,\mathrm{d}t\bigg)\bigg| \\
&\leq\frac{1}{b^2}\|\tilde{h}\|+\frac{1}{2b^3}\big(b\|\tilde{h}\|+b\|\tilde{h}\|\big)=\frac{2}{b^2}\|\tilde{h}\|.
\end{align*}

Now we suppose that $h$ is Lipschitz.  We shall now prove the non-uniform bound for $|f(x)|$.  By the mean value theorem, $|\tilde{h}(x)|\leq\|h'\|(|x|+\mathbb{E}|Z|)$, where $Z\sim \mathrm{Laplace}(0,b)$.  Note that $\mathbb{E}|Z|=b$.  Also, in anticipation of bounding $|f(x)|$ we note two integral inequalities: for $\lambda>0$ and $x\in\mathbb{R}$,
\begin{align*}\mathrm{e}^{\lambda x}\int_x^\infty |t|\mathrm{e}^{-\lambda t}\,\mathrm{d}t<\frac{2}{\lambda^2}(1+\lambda|x|), \quad \mathrm{e}^{-\lambda x}\int_{-\infty}^x |t|\mathrm{e}^{\lambda t}\,\mathrm{d}t<\frac{2}{\lambda^2}(1+\lambda|x|).
\end{align*} 
We verify the first inequality; the second inequality is proved similarly.  For $x\geq0$,
\begin{align*}\mathrm{e}^{\lambda x}\int_x^\infty |t|\mathrm{e}^{-\lambda t}\,\mathrm{d}t=\frac{1}{\lambda^2}(1+\lambda x)
\end{align*}
and, for $x<0$, 
\begin{align*}\mathrm{e}^{\lambda x}\int_x^\infty |t|\mathrm{e}^{-\lambda t}\,\mathrm{d}t&=\mathrm{e}^{\lambda x}\bigg(-\int_x^0 t\mathrm{e}^{-\lambda t}\,\mathrm{d}t+\int_0^\infty t\mathrm{e}^{-\lambda t}\,\mathrm{d}t\bigg) \\
&=\frac{1}{\lambda^2}\big(2\mathrm{e}^{\lambda x}-1-\lambda x\big)<\frac{1}{\lambda^2}(1-\lambda x).
\end{align*}
Putting all of the above together, we obtain, for $x\in\mathbb{R}$,
\begin{align*}|f(x)|&\leq\frac{\|h'\|}{2b}\bigg(\mathrm{e}^{x/b}\int_x^\infty\mathrm{e}^{-t/b}(|t|+b)\,\mathrm{d}t+\mathrm{e}^{-x/b}\int_{-\infty}^x\mathrm{e}^{t/b}(|t|+b)\,\mathrm{d}t\bigg) \\
&\leq\frac{1}{2b}\bigg(2b^2\bigg(1+\frac{|x|}{b}\bigg)+2b^2\bigg) =\|h'\|(2b+|x|).
\end{align*}

Finally, we prove the uniform bounds.  We note that applying integration by parts to (\ref{firstd}) gives that
\begin{align*}f'(x)&=\frac{1}{2b}\bigg\{\frac{1}{b}\mathrm{e}^{x/b}\bigg[b\mathrm{e}^{-x/b}\tilde{h}(x)+b\int_x^\infty\mathrm{e}^{-t/b}h'(t)\,\mathrm{d}t\bigg]\\
&\quad-\frac{1}{b}\mathrm{e}^{-x/b}\bigg[b\mathrm{e}^{x/b}\tilde{h}(x)-b\int_{-\infty}^x\mathrm{e}^{t/b}h'(t)\,\mathrm{d}t\bigg]\bigg\} \\
&=\frac{1}{2b}\bigg(\mathrm{e}^{x/b}\int_x^\infty\mathrm{e}^{-t/b}h'(t)\,\mathrm{d}t+\mathrm{e}^{-x/b}\int_{-\infty}^x\mathrm{e}^{t/b}h'(t)\,\mathrm{d}t\bigg).
\end{align*}
We recognise this representation of $f'(x)$ as being the same as the representation (\ref{ff9os}) of $f(x)$, with $\tilde{h}(t)$ replaced by $h'(t)$, and so we can immediately deduce the bounds in (\ref{lipbounds}) for $\|f'\|$, $\|f''\|$ and $\|f^{(3)}\|$.  Repeating the procedure inductively yields the bounds for $\|f^{(k+1)}\|$, $\|f^{(k+2)}\|$ and $\|f^{(k+3)}\|$, $k\geq0$.
\end{proof}


The following distributional transformation, introduced by \cite{pike}, is very natural in the context of Stein's method for Laplace approximation. Let $W$ have mean zero and non-zero finite variance. Then we say that the random variable $W^L$ has the \emph{centered equilibrium
distribution} with respect to $W$ if 
\begin{equation}\label{pikegh}\mathbb{E}[f(W)]-f(0)=\frac{1}{2}\mathbb{E}[W^2]\mathbb{E}[f''(W^L)]
\end{equation}
for all twice differentiable $f:\mathbb{R}\rightarrow\mathbb{R}$ such that $\mathbb{E}|f(W)|<\infty$ and $\mathbb{E}|Wf'(W)|<\infty$.  Stronger conditions were imposed on $f$ by \cite{pike}, but on examining the proof of their Theorem 3.2 it can be seen that the weaker conditions presented here are sufficient to ensure $W^L$ exists and is unique.
We also refer the reader to \cite{dobler} for a generalisation of (\ref{pikegh}) to all random variables $W$ with finite second moment, and we note that the centered equilibrium distribution is itself the Laplace analogue of the equilibrium distribution that is used in Stein's method for exponential approximation by \cite{pekoz1}.  Some useful properties of the centered equilibrium transformation are collected in Section 3 of \cite{pike} and Proposition 4.6 of \cite{gaunt vgii}.  In the sequel, the following moment relations will be important: assuming $\mathbb{E}[W^2]=2b^2$, we have that, for $r\geq0$,
\begin{equation}\label{moml}\mathbb{E}[(W^L)^r]=\frac{\mathbb{E}[W^{r+2}]}{(r+1)(r+2)b^2}, \quad \mathbb{E}[|W^L|^r]=\frac{\mathbb{E}[|W|^{r+2}]}{(r+1)(r+2)b^2}.
\end{equation}
The formulas in (\ref{moml}) are obtained by substituting $f_1(w)=w^{r+2}$ and $f_2(w)=|w|^{r+2}$, respectively, into (\ref{pikegh}) and using that $\mathbb{E}[W^2]=2b^2$.

 Theorem \ref{jazzz} below gives general bounds for Laplace approximation involving the centered equilibrium transformation. Bounds (\ref{dfgh1}) -- (\ref{zezozr2}) of the theorem are the Laplace analogues of the bounds of Theorem 2.1 of \cite{pekoz1}, which give Kolmogorov and Wasserstein distance bounds in terms the absolute difference between a random variable $W$ and its $W$-equilibrium transformation.  We additionally provide a bound in the weaker $d_2$ metric, which is used to obtain the $O(p^{-1})$ bound (\ref{on11}) of Theorem \ref{thm555}. We mostly follow the approach of \cite{pekoz1}, but the approach used to obtain the $d_2$ metric bound is similar to that used by \cite[Theorem 3.1]{goldstein} to prove an analogous bound for the zero bias transformation. We begin by stating three lemmas.  The proofs of Lemmas \ref{lemcon} and \ref{lemsmo} are simple and hence omitted, and the proof of Lemma \ref{hae} follows immediately from the estimates of Lemma \ref{lemivp}.

\begin{lemma}\label{lemcon}Let $Z\sim \mathrm{Laplace}(0,b)$.  Then, for any random variable $W$,
\begin{equation}\label{bbnn}\mathbb{P}(\alpha\leq W\leq \beta)\leq \frac{\beta-\alpha}{2b}+2d_{\mathrm{K}}(W,Z).
\end{equation} 
\end{lemma}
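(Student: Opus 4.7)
The plan is to reduce the probability on the left-hand side to an analogous probability for $Z$ (plus a Kolmogorov error) and then exploit that the Laplace density is bounded above by $1/(2b)$.

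First, I would write
\begin{equation*}
\mathbb{P}(\alpha\leq W\leq \beta)=\mathbb{P}(W\leq \beta)-\mathbb{P}(W<\alpha).
\end{equation*}
Applying the definition of the Kolmogorov distance $d_{\mathrm{K}}(W,Z)=\sup_{z\in\mathbb{R}}|\mathbb{P}(W\leq z)-\mathbb{P}(Z\leq z)|$ to each term (noting that $\mathbb{P}(W<\alpha)$ can be handled via approaching $\alpha$ from below, or just by using right-continuity of the Laplace distribution function so that $\mathbb{P}(Z<\alpha)=\mathbb{P}(Z\leq\alpha)$ up to a set of measure zero for the continuous $Z$) yields
\begin{equation*}
\mathbb{P}(\alpha\leq W\leq\beta)\leq \mathbb{P}(\alpha\leq Z\leq\beta)+2d_{\mathrm{K}}(W,Z).
\end{equation*}

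Next, I would bound $\mathbb{P}(\alpha\leq Z\leq\beta)$ by integrating the Laplace density from $\alpha$ to $\beta$. Since the density $f_Z(x)=\frac{1}{2b}\mathrm{e}^{-|x|/b}$ is maximized at $x=0$ with value $\frac{1}{2b}$, we have
\begin{equation*}
\mathbb{P}(\alpha\leq Z\leq\beta)=\int_\alpha^\beta \frac{1}{2b}\mathrm{e}^{-|x|/b}\,\mathrm{d}x\leq \frac{\beta-\alpha}{2b}.
\end{equation*}
Combining the two displays gives (\ref{bbnn}). There is no real obstacle here; the only minor subtlety is making sure the strict/non-strict inequalities line up, which is immediate since $Z$ has a continuous distribution.
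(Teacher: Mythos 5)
Your proof is correct. The paper omits the proof, calling it simple, and your argument — replacing $W$ by $Z$ at a cost of $2d_{\mathrm{K}}(W,Z)$ and then bounding $\mathbb{P}(\alpha\leq Z\leq\beta)$ by $\frac{\beta-\alpha}{2b}$ using the fact that the Laplace density is at most $\frac{1}{2b}$ — is exactly the standard argument the authors have in mind; the handling of the strict inequality at $\alpha$ via continuity of $Z$ is the right way to tie up the one small loose end.
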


\begin{lemma}\label{hae}For any $a\in\mathbb{R}$ and any $\epsilon>0$, define
\begin{equation}\label{hae5}h_{a,\epsilon}(x):=\epsilon^{-1}\int_0^\epsilon \mathbf{1}(x+s\leq a)\,\mathrm{d}s.
\end{equation}
Let $f_{a,\epsilon}$ be the solution (\ref{ff9os}) with test function $h_{a,\epsilon}$.  Let $h_{a,0}(x)=\mathbf{1}(x\leq a)$ and define $f_{a,0}$ accordingly. Then
\begin{eqnarray}\label{hae1}\|f_{a,\epsilon}\|&\leq&1, \\
\label{hae2}\|f_{a,\epsilon}'\|&\leq& \frac{1}{b}, \\
\|f_{a,\epsilon}''\|&\leq& \frac{2}{b^2}.\nonumber
\end{eqnarray}
\end{lemma}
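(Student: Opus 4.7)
The plan is to recognise that this lemma is essentially a direct corollary of the uniform bounds (\ref{firstbounds}) of Lemma \ref{lemivp}, specialised to the test function $h=h_{a,\epsilon}$ (including the $\epsilon=0$ case). The only thing one has to check is that $h_{a,\epsilon}$ is a measurable function with $\|\tilde h_{a,\epsilon}\|\le 1$, so that feeding it into (\ref{firstbounds}) yields exactly the three bounds claimed.

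First I would observe from the definition (\ref{hae5}) that for every $\epsilon\ge0$ and $x\in\mathbb{R}$ one has $0\le h_{a,\epsilon}(x)\le 1$: for $\epsilon>0$ it is the average over $s\in[0,\epsilon]$ of the indicator $\mathbf{1}(x+s\le a)\in\{0,1\}$, and for $\epsilon=0$ it is itself an indicator. In particular $h_{a,\epsilon}$ is measurable, bounded, and satisfies $\mathbb{E}|h_{a,\epsilon}(Z)|<\infty$ for $Z\sim\mathrm{Laplace}(0,b)$, so the hypotheses of Lemma \ref{lemivp} are met and $f_{a,\epsilon}$ is well defined through (\ref{ff9os}).

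Next I would bound the centred test function. Writing $\mu_{a,\epsilon}:=\mathbb{E}[h_{a,\epsilon}(Z)]\in[0,1]$, we have $\tilde h_{a,\epsilon}(x)=h_{a,\epsilon}(x)-\mu_{a,\epsilon}\in[-\mu_{a,\epsilon},\,1-\mu_{a,\epsilon}]$, so $\|\tilde h_{a,\epsilon}\|\le\max(\mu_{a,\epsilon},1-\mu_{a,\epsilon})\le 1$. Substituting this into the three estimates of (\ref{firstbounds}) of Lemma \ref{lemivp} yields
\[\|f_{a,\epsilon}\|\le\|\tilde h_{a,\epsilon}\|\le 1,\quad \|f_{a,\epsilon}'\|\le\tfrac{1}{b}\|\tilde h_{a,\epsilon}\|\le\tfrac{1}{b},\quad \|f_{a,\epsilon}''\|\le\tfrac{2}{b^2}\|\tilde h_{a,\epsilon}\|\le\tfrac{2}{b^2},\]
which are precisely (\ref{hae1}), (\ref{hae2}) and the third displayed bound. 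There is no genuine obstacle here: the analytic heavy lifting has already been done in Lemma \ref{lemivp}, and the extension of those bounds from continuous $h$ to the merely measurable $h_{a,\epsilon}$ (in particular the indicator $h_{a,0}$) is exactly the generality the lemma was crafted to allow, which is why the authors advertise its proof as immediate.
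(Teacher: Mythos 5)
Your argument is correct and is precisely the intended one: the paper explicitly states that the proof of this lemma ``follows immediately from the estimates of Lemma \ref{lemivp},'' and your observation that $0\le h_{a,\epsilon}\le1$ gives $\|\tilde h_{a,\epsilon}\|\le1$, after which (\ref{firstbounds}) delivers all three bounds, is exactly that deduction.
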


\begin{lemma}\label{lemsmo}Let $W$ be a real-valued random variable and let $Z\sim \mathrm{Laplace}(0,b)$.  Then, for any $\epsilon>0$,
\begin{equation*}d_{\mathrm{K}}(W,Z)\leq \frac{\epsilon}{2b}+\sup_{a\in\mathbb{R}}|\mathbb{E}[h_{a,\epsilon}(W)]-\mathbb{E}[h_{a,\epsilon}(Z)]|,
\end{equation*}
with $h_{a,\epsilon}$ defined as in Lemma \ref{hae}.
\end{lemma}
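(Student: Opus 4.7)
The plan is to run the standard smoothing argument, exploiting the fact that $h_{a,\epsilon}$ is exactly a trapezoidal smoothing of the indicator $\mathbf{1}(\cdot \le a)$. Concretely, from the definition I first observe that
\begin{equation*}
h_{a,\epsilon}(x) = \begin{cases} 1, & x \le a-\epsilon, \\ (a-x)/\epsilon, & a-\epsilon \le x \le a, \\ 0, & x \ge a, \end{cases}
\end{equation*}
so that, pointwise in $x$, one has the sandwich
\begin{equation*}
\mathbf{1}(x \le a-\epsilon) \le h_{a,\epsilon}(x) \le \mathbf{1}(x \le a), \qquad \mathbf{1}(x \le a) \le h_{a+\epsilon,\epsilon}(x) \le \mathbf{1}(x \le a+\epsilon).
\end{equation*}

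Next I would fix $a \in \mathbb{R}$ and bound $\mathbb{P}(W \le a) - \mathbb{P}(Z \le a)$ from above and below. For the upper bound, using $\mathbf{1}(W\le a) \le h_{a+\epsilon,\epsilon}(W)$ and then adding and subtracting $\mathbb{E}[h_{a+\epsilon,\epsilon}(Z)]$,
\begin{equation*}
\mathbb{P}(W\le a) - \mathbb{P}(Z\le a) \le \bigl(\mathbb{E}[h_{a+\epsilon,\epsilon}(W)] - \mathbb{E}[h_{a+\epsilon,\epsilon}(Z)]\bigr) + \bigl(\mathbb{E}[h_{a+\epsilon,\epsilon}(Z)] - \mathbb{P}(Z\le a)\bigr),
\end{equation*}
and since $h_{a+\epsilon,\epsilon}(Z) \le \mathbf{1}(Z \le a+\epsilon)$ the second bracket is at most $\mathbb{P}(a < Z \le a+\epsilon)$. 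The analogous lower bound uses $h_{a,\epsilon}(W) \le \mathbf{1}(W\le a)$ and $h_{a,\epsilon}(Z) \ge \mathbf{1}(Z \le a-\epsilon)$, producing the term $\mathbb{P}(a-\epsilon < Z \le a)$.

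I would then control these two residual probabilities by the uniform bound on the Laplace density, namely $f_Z(x) \le 1/(2b)$, which gives $\mathbb{P}(a-\epsilon < Z \le a+\epsilon) \le \epsilon/(2b)$ (either piece is bounded by $\epsilon/(2b)$). Combining the two one-sided estimates, taking absolute values, and then taking the supremum over $a \in \mathbb{R}$ yields
\begin{equation*}
d_{\mathrm{K}}(W,Z) \le \frac{\epsilon}{2b} + \sup_{a\in\mathbb{R}}\bigl|\mathbb{E}[h_{a,\epsilon}(W)] - \mathbb{E}[h_{a,\epsilon}(Z)]\bigr|,
\end{equation*}
as required. There is no real obstacle here: the proof is entirely routine once the sandwich inequality and the density bound on $Z$ are in hand, which is presumably why the authors omit it.
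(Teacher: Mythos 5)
Your proof is correct and is the standard smoothing argument the paper has in mind when it declares the proof of Lemma~\ref{lemsmo} ``simple and hence omitted.'' The explicit piecewise form of $h_{a,\epsilon}$, the two-sided sandwich by shifted indicators, and the uniform density bound $f_Z(x)\leq 1/(2b)$ for the $\mathrm{Laplace}(0,b)$ density are exactly the right ingredients, and the resulting two one-sided estimates combine to give the stated inequality after taking the supremum over $a$.
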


\begin{theorem} \label{jazzz} Let $W$ be random variable with zero mean and variance $2b^2\in(0,\infty)$, and let $W^{L}$ have the $W$-centered equilibrium distribution.  Then, for any $\beta>0$,  
\begin{eqnarray}\label{dfgh1}d_{\mathrm{K}}(W,Z)&\leq& \frac{(7/2+\sqrt{10})\beta}{b}+3\bigg(1+\sqrt{\frac{2}{5}}\bigg)\mathbb{P}(|W-W^{L}|>\beta), \\
\label{dk76}d_{\mathrm{K}}(W^{L},Z)&\leq& \frac{\beta}{b}+2\mathbb{P}(|W-W^{L}|>\beta).
\end{eqnarray}
Suppose further that $\mathbb{E}[|W|^3]<\infty$.  Then 
\begin{eqnarray}\label{zezozr}d_{\mathrm{W}}(W,Z) &\leq&2\mathbb{E}|W-W^{L}|, \\
\label{zezozr1}d_{\mathrm{W}}(W^{L},Z) &\leq&\mathbb{E}|W-W^{L}|, \\
\label{zezozr2} d_{\mathrm{K}}(W^{L},Z) &\leq&\frac{1}{b}\mathbb{E}|W-W^{L}|.
\end{eqnarray}
Suppose now that $\mathbb{E}[W^4]<\infty$.  Then
\begin{equation}\label{ordern}d_{2}(W,Z)\leq b\mathbb{E}[|\mathbb{E}[W-W^L\,|\,W]|] +\mathbb{E}[(W-W^L)^2].
\end{equation}
\end{theorem}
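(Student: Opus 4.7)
My plan starts by combining the Stein equation (\ref{ivp}) with the centered equilibrium characterization (\ref{pikegh}). Taking expectations in $b^2 f''(W) - f(W) = \tilde h(W)$ and using $\mathbb{E}[W^2]=2b^2$ and $f(0)=0$ in (\ref{pikegh}) to rewrite $\mathbb{E}[f(W)] = b^2\mathbb{E}[f''(W^L)]$, I obtain the master identity
\[
\mathbb{E}[h(W)] - \mathbb{E}[h(Z)] = b^2\mathbb{E}\bigl[f''(W) - f''(W^L)\bigr].
\]
Replacing $b^2 f''$ by $f + \tilde h$ on the right gives the equivalent splitting $\mathbb{E}[h(W)] - \mathbb{E}[h(Z)] = \mathbb{E}[f(W) - f(W^L)] + \mathbb{E}[h(W) - h(W^L)]$, from which the useful companion identity $\mathbb{E}[h(W^L)] - \mathbb{E}[h(Z)] = \mathbb{E}[f(W) - f(W^L)]$ follows immediately.

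The bounds (\ref{zezozr})--(\ref{zezozr2}) and the truncated bound (\ref{dk76}) are then short. For (\ref{zezozr1}) and (\ref{zezozr2}) I apply the companion identity together with $\|f'\|\le 1$ or $\|f'\|\le 1/b$ (from Lemmas \ref{lemivp} and \ref{hae}) and the mean value theorem. For (\ref{zezozr}) I apply the master identity and bound $|f''(W) - f''(W^L)| \le \|f^{(3)}\| |W-W^L| \le (2/b^2)|W-W^L|$. For (\ref{dk76}) I use the companion identity with $h=\mathbf{1}(\cdot\le a)$ and split $\mathbb{E}|f(W)-f(W^L)|$ according to whether $|W-W^L|\le\beta$: the estimate $\|f'\|\le 1/b$ controls the small-difference event and $\|f\|\le 1$ controls its complement, yielding $\beta/b + 2\mathbb{P}(|W-W^L|>\beta)$.

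For the Kolmogorov bound on $W$ itself, (\ref{dfgh1}), I would pass through the coupling. The containment $\{W\le a,\,|W-W^L|\le\beta\}\subseteq\{W^L\le a+\beta\}$ (and its symmetric counterpart) combined with the density bound $F_Z(a+\beta) - F_Z(a)\le \beta/(2b)$ yields the sandwich
\[
d_{\mathrm{K}}(W,Z) \le \frac{\beta}{2b} + d_{\mathrm{K}}(W^L,Z) + \mathbb{P}(|W-W^L|>\beta).
\]
Substituting (\ref{dk76}), possibly applied at a parameter $\gamma$ distinct from the $\beta$ in the sandwich step, and combining produces a bound of the form $c_1\beta/b + c_2\mathbb{P}(|W-W^L|>\beta)$. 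I expect this step to be the main obstacle: extracting the precise constants $(7/2+\sqrt{10})$ and $3(1+\sqrt{2/5})$ requires careful two-parameter bookkeeping and an AM--GM-type trade-off between the $\beta/b$ and the tail contributions, reflected in the identity $\sqrt{10}\cdot\sqrt{2/5}=2$.

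For the $d_2$ bound (\ref{ordern}), I would Taylor-expand $f''$ in the master identity to second order,
\[
f''(W^L) - f''(W) = f^{(3)}(W)(W^L - W) + \tfrac{1}{2}f^{(4)}(\xi)(W^L - W)^2,
\]
and invoke Lemma \ref{lemivp} with $k=1$ to get $\|f^{(3)}\|\le 1/b$ and $\|f^{(4)}\|\le 2/b^2$ (valid since $\|h''\|\le 1$). The linear piece is handled by the tower property: $\mathbb{E}[f^{(3)}(W)(W^L - W)] = \mathbb{E}\bigl[f^{(3)}(W)\,\mathbb{E}[W^L - W\mid W]\bigr]$, which is bounded by $(1/b)\mathbb{E}|\mathbb{E}[W-W^L\mid W]|$; the quadratic remainder contributes $(1/b^2)\mathbb{E}[(W-W^L)^2]$. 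Multiplying through by $b^2$ delivers (\ref{ordern}).
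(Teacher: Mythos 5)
Your reconstructions of (\ref{dk76}), (\ref{zezozr})--(\ref{zezozr2}) and (\ref{ordern}) coincide with the paper's arguments: the same master identity $\mathbb{E}[h(W)]-\mathbb{E}[h(Z)]=b^2\mathbb{E}[f''(W)-f''(W^L)]$, the same companion identity $\mathbb{E}[h(W^L)]-\mathbb{E}[h(Z)]=\mathbb{E}[f(W)-f(W^L)]$, the same split on $\{|W-W^L|\le\beta\}$, and the same Taylor-plus-tower-property step for (\ref{ordern}). For (\ref{dfgh1}) you take a genuinely different route, and your worry about recovering the constants $(7/2+\sqrt{10})$ and $3(1+\sqrt{2/5})$ is unfounded: the coupling sandwich
\[
d_{\mathrm{K}}(W,Z)\le\frac{\beta}{2b}+d_{\mathrm{K}}(W^L,Z)+\mathbb{P}(|W-W^L|>\beta)
\]
combined with (\ref{dk76}) at the same $\beta$ gives $d_{\mathrm{K}}(W,Z)\le\frac{3\beta}{2b}+3\,\mathbb{P}(|W-W^L|>\beta)$, which is strictly sharper than (\ref{dfgh1}) (since $3/2<7/2+\sqrt{10}$ and $3<3(1+\sqrt{2/5})$) and therefore proves the stated inequality a fortiori --- no two-parameter bookkeeping or AM--GM trade-off is needed. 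The paper instead follows the Pek\"oz--R\"ollin scheme: it works with the smoothed indicator $h_{a,\epsilon}$, bounds the main term $J_1$ via the concentration Lemma \ref{lemcon}, which re-introduces $\kappa=d_{\mathrm{K}}(W,Z)$ on the right, and then solves the resulting self-referential inequality; the bootstrap costs a factor $\eta/(\eta-2)$, and choosing $\eta=2+\sqrt{10}$ to minimise the $\beta$-coefficient produces the weaker constants. Your direct sandwich avoids the bootstrap entirely and incidentally shows that the constants in (\ref{dfgh1}) are not tight.
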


\begin{remark}Analogues of inequalities (\ref{dfgh1}) -- (\ref{zezozr2}) for variance-gamma approximation were given in Theorem 4.10 of \cite{gaunt vgii}, which as special cases give bounds for Laplace approximation in terms of the centered equilibrium distribution.  In all cases, our bounds improve on the bounds of \cite{gaunt vgii}.
\end{remark}

\begin{proof}For ease of notation, we let $\kappa=d_{\mathrm{K}}(W,Z)$. We also let $\Delta:= W -W^{L}$ and $I_1 := \mathbf{1}(|\Delta| \leq \beta)$.  Let $f$ be the solution of the $\mathrm{Laplace}(0,b)$ Stein equation with test function $h_{a,\epsilon}$, as given in (\ref{hae5}).  Note that the expectation $\mathbb{E}[ f''(W^{L})]$ is well defined, since $\|f''\|<\infty$ (see Lemma \ref{hae}).  By the Laplace Stein equation (\ref{ivp}), we have
\begin{align*}\mathbb{E}[h(W)]-\mathbb{E}[h(Z)]&=\mathbb{E}[b^2f''(W)-f(W)]\\
&=b^2\mathbb{E}[I_1(f''(W)-f''(W^{L}))]+b^2\mathbb{E}[(1-I_1)(f''(W)-f''(W^{L}))]\\
&=:J_1+J_2.
\end{align*}
Using the bound (\ref{hae1}) we have
\begin{align*}|J_2|&=|\mathbb{E}[(1-I_1)(f(W)-f(W^{L})+\tilde{h}_{a,\epsilon}(W)-\tilde{h}_{a,\epsilon}(W^{L}))]| \\
&=|\mathbb{E}[(1-I_1)(f(W)-f(W^{L})+h_{a,\epsilon}(W)-h_{a,\epsilon}(W^{L}))]| \\
&\leq (2\|f\|+1)\mathbb{P}(|\Delta|>\beta)\\
&\leq 3\mathbb{P}(|\Delta|>\beta).
\end{align*}
We also have
\begin{align*}J_1&=\mathbb{E}\bigg[I_1\int_0^{-\Delta} b^2f^{(3)}(W+t)\,\mathrm{d}t\bigg] \\
&=\mathbb{E}\bigg[I_1\int_0^{-\Delta} \big\{f'(W+t)-\epsilon^{-1}\mathbf{1}(a-\epsilon\leq W+t\leq a)\big\}\,\mathrm{d}t\bigg] \\
&\leq \|f'\|\mathbb{E}|I_1\Delta|+\epsilon^{-1}\int_{-\beta}^0\mathbb{P}(a-\epsilon\leq W+t\leq a)\,\mathrm{d}t \\
&\leq\frac{\beta}{b}+\frac{\beta}{2b}+2\beta\epsilon^{-1}\kappa = \frac{3\beta}{2b}+2\beta\epsilon^{-1}\kappa,
\end{align*}
where we used inequality (\ref{hae2}) and Lemma \ref{lemcon} to obtain the last inequality.  By a similar argument,
\begin{align*}J_1\geq  -\frac{3\beta}{2b}-2\beta\epsilon^{-1}\kappa,
\end{align*}
and so we conclude that
\begin{equation*}|J_1|\leq \frac{3\beta}{2b}+2\beta\epsilon^{-1}\kappa.
\end{equation*}
We now apply Lemma \ref{lemsmo} and take the convenient choice $\epsilon=\eta\beta$, $\eta>2$, to obtain
\begin{align*}\kappa&\leq 3\mathbb{P}(|\Delta|>\beta)+\frac{3\beta+\epsilon}{2b}+2\beta\epsilon^{-1}\kappa = 3\mathbb{P}(|\Delta|>\beta)+\frac{(3+\eta)\beta}{2b}+\frac{2\kappa}{\eta},
\end{align*}
which on rearranging yields
\begin{align}\label{psrev}\kappa\leq\frac{3\eta}{\eta-2}\mathbb{P}(|\Delta|>\beta)+\frac{(3\eta+\eta^2)\beta}{2b(\eta-2)}.
\end{align}
Choosing $\eta=2+\sqrt{10}$ minimises the second term in (\ref{psrev}) and yields the bound (\ref{dfgh1}). We elected to minimise the second term because in some applications the first term vanishes; as an example, see the proof of inequality (\ref{wdv}).

Now we prove inequality (\ref{dk76}).  We have
\begin{align*}\mathbb{E}[b^2f''(W^{L})-f(W^{L})]&=\mathbb{E}[f(W)-f(W^{L})] \\
&=\mathbb{E}[I_1(f(W)-f(W^{L}))]+\mathbb{E}[(1-I_1)(f(W)-f(W^{L}))].
\end{align*} 
By the mean value theorem, applying the triangle inequality and then using the bounds (\ref{hae1}) and (\ref{hae2}) we obtain
\begin{align*}\mathbb{E}[b^2f''(W^{L})-f(W^{L})] &\leq \|f'\|\mathbb{E}|I_1\Delta|+2\|f\|\mathbb{P}(|\Delta|>\beta)  \\
&\leq \frac{\beta}{b}+2\mathbb{P}(|\Delta|>\beta),
\end{align*}
yielding inequality (\ref{dk76}).

Now suppose that $\mathbb{E}[|W|^3]<\infty$. By the absolute moment relation (\ref{moml}), this assumption guarantees that $\mathbb{E}|W^{L}|<\infty$. Let $h\in\mathcal{H}_{\mathrm{W}}$. We have
\begin{align*}|\mathbb{E}[h(W)]-\mathbb{E}[h(Z)]|&=|\mathbb{E}[b^2f''(W)-f(W)]|=b^2|\mathbb{E}[f''(W)-f''(W^{L})]| \\
&\leq b^2\|f^{(3)}\|\mathbb{E}|W-W^{L}|\leq 2\mathbb{E}|W-W^{L}|,
\end{align*}
where we used the bound $\|f^{(3)}\|\leq\frac{2}{b^2}\|h'\|$ of Lemma \ref{lemivp} in the final step. This proves inequality (\ref{zezozr}).  Also,
\begin{align}\big|\mathbb{E}\big[b^2 f''(W^{L})-f(W^{L})\big]\big| &=\big|\mathbb{E}f(W)-\mathbb{E}f(W^{L})\big|\nonumber \\
\label{alige}&\leq  \|f'\|\mathbb{E}|W-W^{L}|.
\end{align}
Using inequality $\|f'\|\leq \|h'\|$ of Lemma \ref{lemivp} to (\ref{alige}) gives (\ref{zezozr1}). Suppose now that $h\in\mathcal{H}_{\mathrm{K}}$. Then using the bound $\|f'\|\leq\frac{1}{b}\|\tilde{h}\|$ gives us (\ref{zezozr2}). 

Finally, let $h\in \mathcal{H}_2$.  Suppose that $\mathbb{E}[W^4]<\infty$, which, by the moment relation (\ref{moml}), ensures that $\mathbb{E}[(W^{L})^2]<\infty$.  By Taylor expansion we have
\begin{align*}|\mathbb{E}[b^2f''(W)-f(W)]|&=b^2|\mathbb{E}[f''(W)-f''(W^L)]| \\
&\leq b^2|\mathbb{E}[f^{(3)}(W)(W-W^L)]|+\frac{b^2}{2}\|f^{(4)}\|\mathbb{E}[(W-W^L)^2] \\
&=b^2|\mathbb{E}[f^{(3)}(W)\mathbb{E}[W-W^L\,|\,W]]|+\frac{b^2}{2}\|f^{(4)}\|\mathbb{E}[(W-W^L)^2] \\
&\leq b^2\|f^{(3)}\||\mathbb{E}[|\mathbb{E}[W-W^L\,|\,W]|]+\frac{b^2}{2}\|f^{(4)}\|\mathbb{E}[(W-W^L)^2].
\end{align*}
Applying the bounds $\|f^{(3)}\|\leq \frac{1}{b}\|h''\|$ and $\|f^{(4)}\|\leq \frac{2}{b^2}\|h''\|$ from Lemma \ref{lemivp} then yields the bound (\ref{ordern}), as required.
\end{proof}

\begin{corollary}\label{corsec2}Let  $k\geq1$ and suppose that $\mathbb{E}[|W|^{k+2}]<\infty$.  Then
\begin{equation}\label{ghjk2}d_{\mathrm{K}}(W,Z)\leq 11.56\bigg(\frac{\mathbb{E}[|W-W^L|^k]}{b^k}\bigg)^{1/(k+1)}.
\end{equation}
\end{corollary}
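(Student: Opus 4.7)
The plan is to start from the Kolmogorov bound (\ref{dfgh1}) of Theorem \ref{jazzz}, namely
\[
d_{\mathrm{K}}(W,Z)\leq \frac{(7/2+\sqrt{10})\beta}{b}+3\bigg(1+\sqrt{\tfrac{2}{5}}\bigg)\mathbb{P}(|W-W^{L}|>\beta),
\]
and dispose of the tail probability with Markov's inequality at order $k$. Since $\mathbb{E}[|W|^{k+2}]<\infty$ together with the absolute moment relation (\ref{moml}) gives $\mathbb{E}[|W^L|^k]<\infty$, the random variable $|W-W^L|$ has a finite $k$th moment, and Markov yields $\mathbb{P}(|W-W^L|>\beta)\leq \beta^{-k}\mathbb{E}[|W-W^L|^k]$. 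Plugging this in leaves an expression of the form $A\beta + B\beta^{-k}$ in the free parameter $\beta>0$, with $A=(7/2+\sqrt{10})/b$ and $B=3(1+\sqrt{2/5})\mathbb{E}[|W-W^L|^k]$.

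Next, I would minimise $A\beta+B\beta^{-k}$ in $\beta$. Setting the derivative to zero gives $\beta^{k+1}=kB/A$, and substituting back shows that the minimum equals
\[
(k+1)\,k^{-k/(k+1)}\,A^{k/(k+1)}\,B^{1/(k+1)}.
\]
Grouping the $b$-dependence, the bound then reads
\[
d_{\mathrm{K}}(W,Z)\leq C_k\,\bigg(\frac{\mathbb{E}[|W-W^L|^k]}{b^k}\bigg)^{1/(k+1)},
\]
with the explicit prefactor
\[
C_k:=(k+1)\,k^{-k/(k+1)}\,(7/2+\sqrt{10})^{k/(k+1)}\,\bigl(3(1+\sqrt{2/5})\bigr)^{1/(k+1)}.
\]

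The only remaining task, and the one requiring a small amount of care, is to verify the uniform numerical bound $\sup_{k\geq 1}C_k\leq 11.56$. The factor $(k+1)k^{-k/(k+1)}$ and the factor $(3(1+\sqrt{2/5}))^{1/(k+1)}$ both decrease in $k$, while $(7/2+\sqrt{10})^{k/(k+1)}$ increases to $7/2+\sqrt{10}\approx 6.662$; straightforward monitoring of the three factors shows that $C_k$ attains its maximum over $k\geq 1$ at $k=1$, where
\[
C_1=2\sqrt{(7/2+\sqrt{10})\cdot 3(1+\sqrt{2/5})}=2\sqrt{32.63\ldots}<11.43,
\]
and $C_k\to 7/2+\sqrt{10}\approx 6.66$ as $k\to\infty$. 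Hence $C_k\leq 11.56$ for every $k\geq 1$, completing the proof.
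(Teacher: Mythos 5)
Your proof is correct and follows the same route as the paper: apply Markov's inequality to the Kolmogorov bound (\ref{dfgh1}) and then choose $\beta$. The only difference is in the choice of $\beta$. The paper takes $\beta=(b\,\mathbb{E}[|W-W^L|^k])^{1/(k+1)}$, which makes the two terms exactly proportional to $(\mathbb{E}[|W-W^L|^k]/b^k)^{1/(k+1)}$ with the $k$-independent coefficient $(7/2+\sqrt{10})+3(1+\sqrt{2/5})=11.5596\ldots<11.56$, so the constant appears at once with no further work. You instead minimise $A\beta+B\beta^{-k}$ exactly, arriving at a $k$-dependent constant $C_k$. That is legitimate and yields a (very marginally) sharper bound, but it creates the extra task of showing $\sup_{k\geq1}C_k\leq 11.56$. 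Your monotonicity argument for that step is a bit loose as stated: you observe two factors decrease while one increases, which on its own does not locate the maximiser of the product. The cleanest patch is to observe that, because $C_k(\mathbb{E}[|W-W^L|^k]/b^k)^{1/(k+1)}$ is the minimum of $A\beta+B\beta^{-k}$ over all $\beta>0$, it is automatically no larger than the value obtained at the paper's particular choice, which is $(\alpha+\gamma)(\mathbb{E}[|W-W^L|^k]/b^k)^{1/(k+1)}$ with $\alpha+\gamma<11.56$; hence $C_k\leq 11.56$ for every $k$ without any case analysis. With that observation your exact-minimisation route collapses back to the paper's argument.
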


\begin{proof}Applying Markov's inequality to (\ref{dfgh1}) gives
\begin{equation*}d_{\mathrm{K}}(W,Z)\leq \frac{(7/2+\sqrt{10})\beta}{b}+3\bigg(1+\sqrt{\frac{2}{5}}\bigg)\frac{\mathbb{E}[|W-W^L|^k]}{\beta^k},
\end{equation*}
whence on setting $\beta=(b\mathbb{E}[|W-W^L|^k])^{1/(k+1)}$ we obtain (\ref{ghjk2}).
\end{proof}

\section{Proof of Theorem \ref{thm555}}\label{sec3}

We begin by proving the following general theorem, which improves on Theorem 4.4 of \cite{pike} and Theorem 5.9 of \cite{gaunt vgii}.  The improvement comes from smaller constants than in both of those theorems and by giving the bounds in metrics stronger than the bounded Wasserstein metric bounds of \cite{pike}. Very recently, \cite{prr19} have obtained an optimal order Wasserstein distance bound for a multivariate generalisation of the following theorem.  In their result $X_1,X_2,\ldots$ are i.i.d$.$ random vectors, the limiting distribution is a centered multivariate symmetric Laplace distribution (see \cite{kkp01}) and an explicit constant is not given in their bound.




\begin{theorem}\label{thmnbc}Suppose that $X_1,X_2,\ldots$ is a sequence of independent random variables, with $\mathbb{E}[X_i]=0$ and $\mathbb{E}[X_i^2]=\sigma_i^2\in(0,\infty)$. Let $N$ be a positive, integer-valued random variable with finite mean $\mu$, which is independent of the $X_i$.  Define $\sigma^2=\frac{1}{\mu}\mathbb{E}\big[\big(\sum_{i=1}^NX_i\big)^2\big]=\frac{1}{\mu}\mathbb{E}\big[\sum_{i=1}^N\sigma_i^2\big]$.  Also, let $M$ be a random variable satisfying
\begin{equation*}\mathbb{P}(M=m)=\frac{\sigma_m^2}{\mu\sigma^2}\mathbb{P}(N\geq m), \quad m=1,2,\ldots.
\end{equation*}
Define $W_\mu=\frac{1}{\sqrt{\mu}}\sum_{i=1}^NX_i$ and let $Z\sim \mathrm{Laplace}(0,\frac{\sigma}{\sqrt{2}})$.  Then
\begin{equation}\label{wedf}d_{\mathrm{W}}(W_\mu, Z)\leq 2\mu^{-1/2}\big\{\mathbb{E}|X_M-X_M^L|+\sup_{i\geq1}\sigma_i\mathbb{E}\big[|N-M|^{\frac{1}{2}}\big]\big\}.
\end{equation}
Now suppose that $|X_i|\leq C$ for all $i$ and $|N-M|\leq K$.  Then we have
\begin{equation}\label{wdv}
d_{\mathrm{K}}(W_\mu, Z)\leq \frac{\sqrt{2}(7/2+\sqrt{10})}{\sigma\sqrt{\mu}}\Big\{\sup_{i\geq1}\|F_{X_i}^{-1}-F_{X_i^L}^{-1}\|+CK\Big\},
\end{equation}
and if $K=0$ the bound also holds for unbounded $X_i$.
\end{theorem}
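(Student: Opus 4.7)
The plan is to apply Theorem~\ref{jazzz} after constructing a convenient coupling between $W_\mu$ and its centered equilibrium $W_\mu^L$. The first step is to identify the law of $W_\mu^L$. Writing $V_{i-1}=\mu^{-1/2}\sum_{j<i}X_j$ and noting that (\ref{pikegh}) applied to the shifted test function $y\mapsto f(a+y)$ gives $\mathbb{E}[f(a+Y)]-f(a)=\tfrac12\mathbb{E}[Y^2]\mathbb{E}[f''(a+Y^L)]$ for $Y$ mean-zero and independent of $a$, a conditional-on-$N$ telescoping expansion of $f(W_\mu)-f(0)$ in the summands $X_i/\sqrt{\mu}$, followed by swapping the order of summation, yields
\[
\mathbb{E}[f(W_\mu)] - f(0) = \sum_{i\geq 1}\frac{\sigma_i^2}{2\mu}\mathbb{P}(N\geq i)\,\mathbb{E}\bigl[f''\bigl(V_{i-1} + X_i^L/\sqrt{\mu}\bigr)\bigr],
\]
where $X_i^L$ is an independent copy of the centered equilibrium of $X_i$. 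Dividing out $\sigma^2/2=\mathbb{E}[W_\mu^2]/2$ identifies the right-hand side as $\tfrac12\mathbb{E}[W_\mu^2]\mathbb{E}[f''(W_\mu^L)]$ with $W_\mu^L\stackrel{d}{=}\mu^{-1/2}\bigl(\sum_{i<M}X_i + X_M^L\bigr)$ and $M$ as in the statement, and any joint law of $(M,N)$ with the prescribed marginal of $M$ is admissible provided $M$ is independent of $(X_i,X_i^L)_i$.

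Next, splitting on $\{M\leq N\}$ versus $\{M>N\}$, a direct calculation gives
\[
W_\mu - W_\mu^L = \frac{1}{\sqrt{\mu}}\bigl(X_M - X_M^L + R\bigr),
\]
where $R=\sum_{i=M+1}^N X_i$ if $M\leq N$ and $R=-\sum_{i=N+1}^{M}X_i$ if $M>N$ (empty sums being zero). In either case $R$, conditionally on $(M,N)$, is a sum of at most $|N-M|$ independent mean-zero variables with variances bounded by $\sup_i\sigma_i^2$, so by Jensen's inequality $\mathbb{E}[|R|\mid M,N]\leq \sup_{i\geq1}\sigma_i\,|N-M|^{1/2}$. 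The triangle inequality then gives
\[
\mathbb{E}|W_\mu - W_\mu^L| \leq \mu^{-1/2}\bigl\{\mathbb{E}|X_M - X_M^L| + \sup_{i\geq1}\sigma_i\,\mathbb{E}[|N-M|^{1/2}]\bigr\},
\]
and (\ref{wedf}) follows from (\ref{zezozr}).

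For the Kolmogorov bound (\ref{wdv}) I would exploit the freedom in the coupling: choose $(M,N)$ so that $|N-M|\leq K$ a.s.\ (guaranteed by hypothesis) and couple each pair $(X_i,X_i^L)$ by the quantile (Fr\'echet) coupling through a common uniform, keeping the pairs independent across $i$, so that $|X_i-X_i^L|\leq\|F_{X_i}^{-1}-F_{X_i^L}^{-1}\|$ a.s. Under $|X_i|\leq C$ one then has $|R|\leq CK$ a.s., and combining these bounds gives
\[
|W_\mu - W_\mu^L| \leq \beta := \mu^{-1/2}\bigl\{\sup_{i\geq1}\|F_{X_i}^{-1}-F_{X_i^L}^{-1}\| + CK\bigr\}\quad\text{a.s.}
\]
The probability term in (\ref{dfgh1}) then vanishes, and (\ref{dfgh1}) with $b=\sigma/\sqrt{2}$ delivers (\ref{wdv}). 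When $K=0$ the summation range in $R$ is empty, so $R=0$ a.s.\ and no boundedness on the $X_i$ is needed.

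The principal difficulty is the clean identification of $W_\mu^L$ in the random-sum setting; once the telescoping construction is correctly executed and the distribution of $M$ is read off from the summation exchange, the coupling decomposition of $W_\mu - W_\mu^L$ and the remaining estimates are essentially routine.
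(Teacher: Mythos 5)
Your proof is correct and follows essentially the same route as the paper. You identify $W_\mu^L \stackrel{d}{=}\mu^{-1/2}\big(\sum_{i<M}X_i + X_M^L\big)$, form the difference $W_\mu - W_\mu^L$, bound the remainder $R$ via Jensen/Cauchy--Schwarz for (\ref{wedf}) using (\ref{zezozr}), and for (\ref{wdv}) pick the quantile coupling for $(X_i,X_i^L)$ so that the tail probability in (\ref{dfgh1}) vanishes, exactly as the paper does (where this step is attributed to Strassen's theorem and the representation of $W_\mu^L$ and the bound on $\mathbb{E}|R|$ are simply cited from the proof of Theorem 4.4 of \cite{pike}). The one place you add something is the telescoping derivation of $W_\mu^L$, which the paper does not reproduce; your derivation is sound, since the conditional form of (\ref{pikegh}) and the exchange of summation are both justified, and the only constraint on the coupling is that, given $M=m$, the variable $X_m^L$ be independent of $(X_j)_{j<m}$ and have the centered equilibrium law of $X_m$ --- which leaves you free to couple $X_m^L$ to $X_m$ via a common uniform, as you observe.
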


\begin{proof}It was shown in the proof of Theorem 4.4 of \cite{pike} that $W_\mu^L=\mu^{-1/2}\big(\sum_{i=1}^{M-1}X_i+X_M^L\big)$. We take $X_m^L$ to be independent of $M$, $N$, and $X_k$ for all $k$.  Therefore
\begin{equation*}W_\mu^L-W_\mu=\mu^{-1/2}\bigg\{(X_M^L-X_M)+\mathrm{sgn}(M-N)\sum_{i=(M\wedge N)+1}^{N\vee M}X_i\bigg\}.
\end{equation*}
Substituting into (\ref{zezozr}) and bounding $\mathbb{E}\big|\sum_{i=(M\wedge N)+1}^{N\vee M}X_i\big|\leq \sup_{i\geq1}\sigma_i\mathbb{E}\big[|N-M|^{1/2}\big]$ (see the proof of Theorem 4.4 of \cite{pike}) gives us (\ref{wedf}).  Recall from (\ref{dfgh1}) that
\begin{align}\label{wdvb}d_{\mathrm{K}}(W_\mu,Z)\leq \frac{(7/2+\sqrt{10})\beta}{b}+\frac{15+3\sqrt{10}}{5}\mathbb{P}(|W_\mu-W_\mu^{L}|>\beta).
\end{align}
On setting $\beta=\mu^{-1/2}\big\{\sup_{i\geq1}\|F_{X_i}^{-1}-F_{X_i^L}^{-1}\|+CK\big\}$, and using Strassen's theorem we deduce (\ref{wdv}) from (\ref{wdvb}) (recalling that $b=\frac{\sigma}{\sqrt{2}}$). The assertion after inequality (\ref{wdv}) follows similarly.
\end{proof}

\noindent{\emph{Proof of Theorem \ref{thm555}.}} To ease notation, in this proof we drop the subscripts from $S_p$ and $N_p$. As noted by \cite{pike}, the assumptions imposed on $N$ and the $X_i$ imply that $\mathcal{L}(M)=\mathcal{L}(N)$, meaning that we can take $M=N$.  Inequality (\ref{wedfg}) now follows from inequality (\ref{wdv}).  To prove inequality (\ref{rwrwa}), we note the following simple inequality (see \cite{pike}) 
\begin{equation*}\mathbb{E}|X_N-X_N^L|\leq \sup_{i\geq1}\mathbb{E}|X_i|+\sup_{i\geq1}\mathbb{E}|X_i^L|=\sup_{i\geq1}\mathbb{E}|X_i|+\sup_{i\geq1}\frac{\mathbb{E}[|X_i|^3]}{3\sigma^2}\leq\sigma+\frac{\rho_3}{3\sigma^2},
\end{equation*}
where in the final step the Cauchy-Schwarz inequality was applied.  We are now able to obtain (\ref{rwrwa}) from (\ref{wedf}).

To prove inequality (\ref{taubound}), we apply inequality (\ref{ghjk2}) of Corollary \ref{corsec2}. We use the assumption that $\sup_{i\geq1}\mathbb{E}[X_i^{k+2}]<\infty$, the moment relation (\ref{moml}) and the simple inequality $|a+b|^r\leq 2^{r-1}(|a|^r+|b|^r)$, $r\geq1$, to obtain the bound
\begin{align}\mathbb{E}[|S-S^L|^k]&=p^{k/2}\mathbb{E}[|X_N-X_N^L|^k]\nonumber\\
&\leq 2^{k-1}p^{k/2}(\mathbb{E}[|X_N|^k]+\mathbb{E}[|X_N^L|^k]) \nonumber\\
\label{bvc61}&\leq 2^{k-1}p^{k/2}\bigg(\rho_k+\frac{2\rho_{k+2}}{(k+1)(k+2)\sigma^2}\bigg).
\end{align}
Substituting into (\ref{ghjk2}) then yields inequality (\ref{taubound}).

We end by establishing inequality (\ref{on11}). We now assume that $X_1,X_2,\ldots$ are identically distributed with $\mathbb{E}[X_1^3]=0$ and $\mathbb{E}[X_1^4]<\infty$.  We prove inequality (\ref{on11}) by applying inequality (\ref{ordern}) of Theorem \ref{jazzz}.  We proceed similarly to we did in obtaining (\ref{bvc61}), but this time use the independence of $X_N$ and $X_N^L$ to obtain
\begin{align}\label{taui}\mathbb{E}[(S-S^L)^2]&=p\mathbb{E}[(X_N-X_N^L)^2]=p(\mathbb{E}[X_N^2]+\mathbb{E}[(X_N^L)^2]) 
= p\bigg(\sigma^2+\frac{\mathbb{E}[X_1^4]}{6\sigma^2}\bigg).
\end{align}
We now bound $\mathbb{E}[|\mathbb{E}[S-S^L\,|\,S]|]$.  We have
\begin{align*}\mathbb{E}[S-S^L\,|\,S]=\sqrt{p}\mathbb{E}[X_N-X_N^L\,|\,S]=\sqrt{p}\big(\mathbb{E}[X_N\,|\,S]-\mathbb{E}[X_N^L]\big),
\end{align*}
as $X_N^L$ and $S$ are independent.  Also, due to the assumption that $\mathbb{E}[X_i^3]=0$ for all $i\geq1$, we have, by (\ref{moml}), that $\mathbb{E}[X_N^L]=\frac{1}{3\sigma^2}\mathbb{E}[X_N^3]=0$.  By the tower property of conditional expectation  we then have
\begin{align*}\mathbb{E}[S-S^L\,|\,S]&=\sqrt{p}\mathbb{E}[\mathbb{E}[X_N\,|S,N]\,|\,S] \\
&=\mathbb{E}\bigg[\frac{S}{N}\,\Big|\,S\bigg],
\end{align*}
where we used that because the $X_i$ are i.i.d$.$, and therefore exchangeable, $\mathbb{E}[X_N\,|\,S,N]=S/(\sqrt{p}N)$.  Therefore
\begin{align}\mathbb{E}[|\mathbb{E}[S-S^L\,|\,S]|]&=\mathbb{E}\bigg[\bigg|\mathbb{E}\bigg[\frac{S}{N}\,\Big|\,S\bigg]\bigg|\bigg]\nonumber\\
& \leq \mathbb{E}\bigg[\mathbb{E}\bigg[\frac{|S|}{N}\,\Big|\,S\bigg]\bigg]\nonumber\\
\label{pnfor}&=\mathbb{E}\bigg[\frac{|S|}{N}\bigg] 
=\sqrt{p}\sum_{n=1}^\infty \frac{1}{n}\mathbb{E}\bigg|\sum_{i=1}^n X_i\bigg|\mathbb{P}(N=n).
\end{align}
Taking $h(x)=|x|$ in inequality (\ref{minrev}) (note that $h\in\mathcal{H}_{\mathrm{W}}$) gives the inequality
\begin{equation*}\bigg|\frac{1}{\sqrt{n}}\mathbb{E}\bigg|\sum_{i=1}^n X_i\bigg|-\sqrt{\frac{2}{\pi}}\sigma\bigg|\leq \frac{\sigma}{\sqrt{n}}\bigg(2+\frac{\mathbb{E}[|X_1|^3]}{\sigma^3}\bigg)
\end{equation*}
(see \cite{cs05} for a similar bound), and on applying this inequality to (\ref{pnfor}) we obtain the bound
\begin{align}\label{n12b}\mathbb{E}[|\mathbb{E}[S-S^L\,|\,S]|]\leq \sqrt{\frac{2p}{\pi}}\sigma\mathbb{E}[N^{-1/2}]+\sqrt{p}\sigma\bigg(2+\frac{\mathbb{E}[|X_1|^3]}{\sigma^3}\bigg)\mathbb{E}[N^{-1}].
\end{align}
The expectation $\mathbb{E}[N^{-1}]$ is easily evaluated:
\begin{align*}\mathbb{E}[N^{-1}]=\sum_{n=1}^\infty\frac{p(1-p)^{n-1}}{n}=\frac{p\log(1/p)}{1-p}.
\end{align*}
We can bound $\mathbb{E}[N^{-1/2}]$ through an application of the integral test:
\begin{align*}\frac{1-p}{p}\mathbb{E}[N^{-1/2}]&=\sum_{n=1}^\infty\frac{(1-p)^n}{\sqrt{n}}<\int_0^\infty\frac{(1-p)^x}{\sqrt{x}}\,\mathrm{d}x=\int_0^\infty\frac{\exp(x\log(1-p))}{\sqrt{x}}\,\mathrm{d}x \\
&=\sqrt{\frac{2}{-\log(1-p)}}\int_0^\infty \mathrm{e}^{-t^2/2}\,\mathrm{d}t=\sqrt{\frac{\pi}{-\log(1-p)}}<\sqrt{\frac{\pi}{p}},
\end{align*}
where we used the standard inequality $\log(1+x)<x$, for $x>-1$, in the last step. Plugging the estimates for $\mathbb{E}[N^{-1/2}]$ and $\mathbb{E}[N^{-1}]$ into (\ref{n12b}) then yields the bound
\begin{align}\label{bvc5}\mathbb{E}[|\mathbb{E}[S-S^L\,|\,S]|]<\frac{\sqrt{2}\sigma p}{1-p}+\frac{\sigma p^{3/2}\log(1/p)}{1-p}\bigg(2+\frac{\mathbb{E}[|X_1|^3]}{\sigma^3}\bigg).
\end{align}
Finally, inserting (\ref{taui}) and inequality (\ref{bvc5}) into (\ref{ordern}) yields the desired bound. \hfill $\Box$

\section{Proof of Theorem \ref{thm888}}\label{sec4}

Let $Z\sim\mathrm{Laplace}(0,\frac{\sigma}{\sqrt{2}})$ and recall that $T_n=B_{n-1}^{1/2}\sum_{i=1}^nX_i$, where the $X_1,\ldots,X_n$ are independent random variables with zero mean and variance $\sigma^2\in(0,\infty)$.  Then we have the representations
\begin{align*}T_n&=_dU_nV_n, \\
Z&=_dUV,
\end{align*}
where $U_n=\sqrt{nB_{n-1}}$, $V_n=\frac{1}{\sqrt{n}}\sum_{i=1}^nX_i$, $U$ follows the Rayleigh distribution with density function $f_U(x)=2x\mathrm{e}^{-x^2}$, $x>0$, and $V\sim N(0,\sigma^2)$ are mutually independent random variables.  This representation of the Laplace distribution is given in \cite[Proposition 2.2.1]{kkp01}.  In the limit $n\rightarrow\infty$, $U_n$ converges in distribution to $U$, and, by the central limit theorem, $V_n$ converges in distribution to $V$.  Indeed, $\mathbb{P}(U_n\leq u)=1-(1-u^2/n)^{n-1}$, $u\in(0,\sqrt{n})$, which converges to $1-\mathrm{e}^{-u^2}$ as $n\rightarrow\infty$.  We prove Theorem \ref{thm888} by obtaining explicit bounds on the distance between the distributions of $U_n$ and $U$ and the distributions of $V_n$ and $V$ with respect to suitable probability metrics and then combine these bounds to bound the distance between $\mathcal{L}(T_n)$ and the $\mathrm{Laplace}(0,\frac{\sigma}{\sqrt{2}})$ distribution. We combine these bounds through the following lemma.

\begin{lemma}\label{uvuv}Let $Y_1,Y_2,Z_1,Z_2$ be real-valued random variables. Then
\begin{eqnarray}d_{\mathrm{K}}(Y_1Z_1,Y_2Z_2)&\leq& d_{\mathrm{K}}(Y_1,Y_2)+d_{\mathrm{K}}(Z_1,Z_2), \nonumber\\
d_{\mathrm{W}}(Y_1Z_1,Y_2Z_2)&\leq& \mathbb{E}|Z_1|d_{\mathrm{W}} (Y_1,Y_2)+\mathbb{E}|Y_2|d_{\mathrm{W}}(Z_1,Z_2),\nonumber \\
\label{d12d12}d_{1,2}(Y_1Z_1,Y_2Z_2)&\leq& \mathbb{E}|Z_1|d_{\mathrm{W}} (Y_1,Y_2)+\mathbb{E}[Y_2^2]d_{2}(Z_1,Z_2),
\end{eqnarray}
where each inequality holds provided the expectations in the the right-hand side of the inequality exist.
\end{lemma}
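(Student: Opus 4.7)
The plan is to prove all three bounds by a common interpolation argument: introduce the intermediate product $Y_2Z_1$ (with $Y_2$ taken independent of $Z_1$, which is harmless since each metric on the left-hand side depends only on the marginal law of each product), write
\[
d_\star(Y_1Z_1,Y_2Z_2)\le d_\star(Y_1Z_1,Y_2Z_1)+d_\star(Y_2Z_1,Y_2Z_2),
\]
and then bound each of the two resulting pieces by conditioning on the factor that is common to both products, reducing everything to a one-dimensional estimate on the test function and then integrating.

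For the Kolmogorov bound, in the first piece condition on $Z_1=t$: the event $\{Y_i t\le z\}$ becomes $\{Y_i\le z/t\}$ or $\{Y_i\ge z/t\}$ according to the sign of $t$ (and is trivial for $t=0$), so $|\mathbb{P}(Y_1 t\le z)-\mathbb{P}(Y_2 t\le z)|\le d_{\mathrm K}(Y_1,Y_2)$ uniformly in $t$; integration over the law of $Z_1$ preserves the uniform bound. The same argument applied to the second piece, now conditioning on $Y_2$, yields $d_{\mathrm K}(Z_1,Z_2)$, and adding gives the claim.

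For the Wasserstein bound, fix $h$ with $\|h'\|\le 1$. Conditioning on $Z_1=t$ in the first piece, the map $y\mapsto h(yt)$ is Lipschitz with constant $|t|$, hence $|\mathbb{E}[h(Y_1 t)-h(Y_2 t)]|\le |t|\,d_{\mathrm W}(Y_1,Y_2)$; taking expectation in $Z_1$ gives the factor $\mathbb{E}|Z_1|$. Symmetrically, conditioning on $Y_2=s$ in the second piece and using that $z\mapsto h(sz)$ is $|s|$-Lipschitz gives the factor $\mathbb{E}|Y_2|$ in front of $d_{\mathrm W}(Z_1,Z_2)$.

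The step with a touch more content is the $d_{1,2}$ bound. Fix $h\in\mathcal H_{1,2}$, so $\|h'\|\le 1$ and $\|h''\|\le 1$. The first piece is handled exactly as in the Wasserstein argument since $\mathcal H_{1,2}\subset\mathcal H_{\mathrm W}$, producing $\mathbb{E}|Z_1|\,d_{\mathrm W}(Y_1,Y_2)$. For the second piece, condition on $Y_2=s$ and, for $s\neq 0$, set $g_s(z):=h(sz)/s^2$; then $g_s''(z)=h''(sz)$, so $\|g_s''\|\le 1$ and $g_s\in\mathcal H_2$, whence $|\mathbb{E}[g_s(Z_1)-g_s(Z_2)]|\le d_2(Z_1,Z_2)$. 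Multiplying by $s^2$ and integrating in $Y_2$ (the contribution from $s=0$ vanishes) yields $\mathbb{E}[Y_2^2]\,d_2(Z_1,Z_2)$. The key observation here is the rescaling $h(s\,\cdot)/s^2$: it trades one factor of $|s|$ for two, which is precisely what allows the weaker $d_2$ metric to replace $d_{\mathrm W}$ on the $Z$-side, at the expected cost of $\mathbb{E}[Y_2^2]$ rather than $\mathbb{E}|Y_2|$. No single step is delicate; the only real content is identifying the correct normalisation $g_s$ so that the right power of $\|h''\|$ is extracted.
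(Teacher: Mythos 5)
Your proposal is correct and follows essentially the same route as the paper: triangle inequality through the intermediate product $Y_2Z_1$, conditioning on the common factor in each piece, and then using the scaling identities $d_{\mathrm W}(aX,aY)=|a|\,d_{\mathrm W}(X,Y)$ and $d_2(aX,aY)=a^2 d_2(X,Y)$ together with $\mathcal H_{1,2}\subset\mathcal H_{\mathrm W}\cap\mathcal H_2$. The paper only writes out the $d_{1,2}$ case and asserts the other two are similar; your version spells out the Kolmogorov and Wasserstein pieces explicitly and makes the rescaling $g_s=h(s\,\cdot)/s^2$ explicit, but this is a presentational rather than a mathematical difference.
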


\begin{proof}We prove the bound for $d_{1,2}$; the bounds for $d_{\mathrm{K}}$ and $d_{\mathrm{W}}$ are obtained through similar and slightly simpler arguments.  Let $h\in\mathcal{H}_{1,2}$.  Then, by the triangle inequality and conditioning,
\begin{align}&|\mathbb{E}[h(Y_1Z_1)]-\mathbb{E}[h(Y_2Z_2)]| \nonumber\\
&\leq |\mathbb{E}[h(Y_1Z_1)]-\mathbb{E}[h(Y_2Z_1)]|+|\mathbb{E}[h(Y_2Z_1)]-\mathbb{E}[h(Y_2Z_2)]|\nonumber \\
&=|\mathbb{E}[\mathbb{E}[h(Y_1Z_1)-h(Y_2Z_1)]\,|\,Z_1]]|+|\mathbb{E}[\mathbb{E}[h(Y_2Z_1)-h(Y_2Z_2)]\,|\,Y_2]]| \nonumber\\
\label{polkj}&\leq \mathbb{E}[|\mathbb{E}[h(Y_1Z_1)-h(Y_2Z_1)]\,|\,Z_1]|]+\mathbb{E}[|\mathbb{E}[h(Y_2Z_1)-h(Y_2Z_2)]\,|\,Y_2]|].
\end{align}
Now, for $a\in\mathbb{R}\setminus\{0\}$ and real-valued random variables $X$ and $Y$ we have that 
\begin{eqnarray*}|\mathbb{E}[h(aX)]-\mathbb{E}[h(aY)]|&\leq& d_{\mathrm{W}}(aX,aY)=ad_{\mathrm{W}}(X,Y), \\
|\mathbb{E}[h(aX)]-\mathbb{E}[h(aY)]|&\leq& d_{2}(aX,aY)=a^2d_{2}(X,Y), 
\end{eqnarray*} 
since $\mathcal{H}_{1,2}\subset \mathcal{H}_{\mathrm{W}}$ and $\mathcal{H}_{1,2}\subset \mathcal{H}_{2}$.  Applying these inequalities to (\ref{polkj}) we obtain that, for $h\in\mathcal{H}_{1,2}$,
\begin{align}|\mathbb{E}[h(Y_1Z_1)]-\mathbb{E}[h(Y_2Z_2)]|&\leq\mathbb{E}[|Z_1d_{\mathrm{W}} (Y_1,Y_2)
|]+\mathbb{E}[|Y_2^2d_{\mathrm{2}} (Z_1,Z_2)
|]\nonumber\\
\label{polk}&=\mathbb{E}|Z_1|d_{\mathrm{W}} (Y_1,Y_2)+\mathbb{E}[Y_2^2]d_{2}(Z_1,Z_2)
\end{align}
The bound (\ref{polk}) holds for all $h\in\mathcal{H}_{1,2}$, and as $d_{1,2}(Y_1Z_1,Y_2Z_2)=\sup_{h\in\mathcal{H}_{1,2}}|\mathbb{E}[h(Y_1Z_1)]-\mathbb{E}[h(Y_2Z_2)]|$ it follows that inequality (\ref{d12d12}) holds.
\end{proof}

There is a vast literature on bounds for $d_{\mathcal{H}}(V_n,V)$.  We will make use of three bounds from the literature for the cases $\mathcal{H}_{\mathrm{K}}$, $\mathcal{H}_{\mathrm{W}}$ and $\mathcal{H}_2$.

\begin{theorem}[Shevtsova \cite{s10}] \label{thmapk} Let $X_1,\ldots,X_n$ be independent random variables with $\mathbb{E}[X_i]=0$, $\mathrm{Var}(X_i)=\sigma^2\in(0,\infty)$ and $\mathbb{E}[|X_i|^3]<\infty$, for all $1\leq i\leq n$.  Denote $V_n=\frac{1}{\sqrt{n}}\sum_{i=1}^nX_i$ and let $V\sim N(0,\sigma^2)$.  Then
\begin{equation*}d_{\mathrm{K}}(V_n,V)\leq\frac{C_0}{\sigma^3n^{3/2}}\sum_{i=1}^n\mathbb{E}[|X_i|^3],
\end{equation*}
where $C_0=0.5600$.
\end{theorem}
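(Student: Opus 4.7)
The plan is to follow the characteristic function approach of Esseen as refined by Shevtsova, which is the standard route to a Berry--Esseen type bound with a sharp explicit constant. Without loss of generality, normalize to $\sigma = 1$ by setting $Y_i = X_i/\sigma$, so the bound becomes $d_{\mathrm{K}}(V_n, V) \le C_0 n^{-3/2} \sum_i \mathbb{E}[|Y_i|^3]$ with $V \sim N(0,1)$. The core tool is Esseen's smoothing inequality, which reduces bounding the Kolmogorov distance between two distribution functions to bounding an integral of the normalized difference of their characteristic functions over a finite window $|t| \le T$, plus a correction term involving $\sup_x |G'(x)|/T$.

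First I would write the characteristic function of $V_n$ as $\prod_{j=1}^n \phi_j(t/\sqrt{n})$, where $\phi_j$ is the characteristic function of $Y_j$, and decompose the difference $\prod_j \phi_j(t/\sqrt{n}) - e^{-t^2/2}$ via a telescoping identity so that each term contains a single factor of the form $\phi_j(t/\sqrt{n}) - e^{-t^2/(2n)}$. Taylor expansion of $\phi_j$ to third order, combined with the hypothesis $\mathbb{E}[|Y_j|^3] < \infty$, bounds this factor by something of order $|t|^3 \mathbb{E}[|Y_j|^3]/n^{3/2}$. Multiplied by an auxiliary product that can be uniformly bounded by $\exp(-\alpha t^2)$ for some $\alpha \in (0, 1/2)$, this produces an integrand of the required order; choosing $T$ proportional to $n^{3/2}/\sum_i \mathbb{E}[|Y_i|^3]$ in Esseen's inequality then delivers the desired rate.

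The main obstacle will be shaving the numerical constant down to $0.5600$. A routine implementation of the above argument, using standard third-order Taylor inequalities for $\phi_j$ and naive choices of the auxiliary bound, yields a constant on the order of $0.9$ or larger, as in Esseen's original 1956 paper. Shevtsova's improvement comes from partitioning the integration region in $|t|$ into several subregions and applying different, carefully chosen inequalities for $|\phi_j(t)|$ on each, then performing a numerical optimization over the subregion boundaries and over $T$. Carrying out this optimization and verifying the resulting constant rigorously is the delicate part; once the pointwise characteristic function estimates are in hand, the remainder is essentially a calculus exercise. An alternative Stein's method approach, using the Stein equation $\sigma^2 f'(x) - x f(x) = h(x) - \mathbb{E}[h(V)]$ with Stein's concentration inequality for sums of independent random variables, would be in keeping with the rest of the paper but has historically produced somewhat larger constants than the characteristic function route and is unlikely to reach $0.5600$ without additional refinements.
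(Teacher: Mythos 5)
The paper does not prove this statement; it is quoted verbatim from Shevtsova \cite{s10} and used as a black box in the proof of Theorem \ref{thm888} (see also the remark following Theorem \ref{thmap2}, which traces the history of the constant $C_0$ back to Berry and Esseen). Your sketch correctly identifies the proof strategy actually used in \cite{s10}: Esseen's smoothing inequality to pass from the Kolmogorov distance to an integral of characteristic-function differences, a telescoping decomposition isolating one factor $\phi_j(t/\sqrt{n})-\mathrm{e}^{-t^2/(2n)}$ at a time, third-order Taylor control of that factor using $\mathbb{E}[|Y_j|^3]<\infty$, a choice of cutoff $T\asymp n^{3/2}/\sum_i\mathbb{E}[|Y_i|^3]$, and finally a region-by-region numerical optimization to bring the constant down to $0.5600$. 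You are also right that this last step is where all the difficulty lies and that a Stein's-method route would thematically fit the rest of the paper but would not reach this constant. What you have written is an honest outline rather than a proof---you explicitly defer the numerical optimization---but as a reconstruction of the cited argument it is on target; a full verification would require reproducing Shevtsova's case analysis on the characteristic-function bounds, which is well beyond the scope of what the present paper needs or attempts.
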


\begin{theorem}[Reinert \cite{r98}] \label{thmapw} Under the same assumptions as Theorem \ref{thmapk}, we have that, for $h\in\mathcal{H}_{\mathrm{W}}$,
\begin{equation}\label{minrev}|\mathbb{E}[h(V_n)]-\mathbb{E}[h(V)]|\leq\frac{\sigma}{n^{3/2}}\sum_{i=1}^n\bigg(2+\frac{\mathbb{E}[|X_i|^3]}{\sigma^3}\bigg).
\end{equation}
Consequently,
\begin{equation}\label{thmapwd}d_{\mathrm{W}}(V_n,V)\leq\frac{\sigma}{n^{3/2}}\sum_{i=1}^n\bigg(2+\frac{\mathbb{E}[|X_i|^3]}{\sigma^3}\bigg).
\end{equation}
\end{theorem}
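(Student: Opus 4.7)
The plan is to apply Stein's method for normal approximation. For $V \sim N(0,\sigma^2)$, the Stein equation $\sigma^2 g'(x) - x g(x) = h(x) - \mathbb{E}[h(V)]$ has, for Lipschitz $h$ with $\|h'\|\le 1$, a unique bounded solution $g=g_h$ satisfying the classical bounds $\|g'\|\le \sqrt{2/\pi}/\sigma$ and $\|g''\|\le 2/\sigma^2$. These follow from the explicit integral representation of $g_h$, with a rescaling to reduce to the standard normal case; the argument is entirely analogous to the Laplace analysis in Lemma \ref{lemivp}. Writing $|\mathbb{E}[h(V_n)]-\mathbb{E}[h(V)]|=|\mathbb{E}[\sigma^2 g'(V_n) - V_n g(V_n)]|$, the task reduces to bounding this quantity.

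For each $i$, set $V_n^{(i)} := V_n - X_i/\sqrt{n}$, which is independent of $X_i$. Using $\mathbb{E}[X_i]=0$,
\[ \mathbb{E}[V_n g(V_n)] = \frac{1}{\sqrt{n}}\sum_{i=1}^n \mathbb{E}\bigl[X_i \bigl(g(V_n)-g(V_n^{(i)})\bigr)\bigr]. \]
A first-order Taylor expansion yields $g(V_n) - g(V_n^{(i)}) = (X_i/\sqrt{n}) g'(V_n^{(i)}) + R_i$ with $|R_i| \le X_i^2 \|g''\|/(2n)$; the assumed third moment finiteness ensures the remainder is integrable. Using the independence of $X_i^2$ and $V_n^{(i)}$ together with $\mathbb{E}[X_i^2]=\sigma^2$, the leading term becomes $\frac{\sigma^2}{n}\sum_i \mathbb{E}[g'(V_n^{(i)})]$. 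Replacing $g'(V_n^{(i)})$ by $g'(V_n)$ costs an error controlled by $|g'(V_n) - g'(V_n^{(i)})|\le \|g''\||X_i|/\sqrt{n}$.

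Collecting the contributions, one obtains
\[ \bigl|\mathbb{E}[\sigma^2 g'(V_n) - V_n g(V_n)]\bigr| \le \|g''\|\biggl( \frac{\sigma^2}{n^{3/2}}\sum_{i=1}^n \mathbb{E}|X_i| + \frac{1}{2n^{3/2}}\sum_{i=1}^n \mathbb{E}[|X_i|^3] \biggr). \]
Substituting $\|g''\|\le 2/\sigma^2$ and $\mathbb{E}|X_i|\le\sigma$ (Cauchy--Schwarz) yields precisely $\frac{\sigma}{n^{3/2}}\sum_{i=1}^n (2+\mathbb{E}[|X_i|^3]/\sigma^3)$. Taking the supremum over $h\in\mathcal{H}_{\mathrm{W}}$ then gives the stated Wasserstein bound.

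The main obstacle is pinning down the sharp constant $\|g''\|\le 2/\sigma^2$ for test functions of the stated regularity; this does not follow from any abstract argument and requires careful work with the explicit solution formula, directly paralleling the Laplace calculation in Lemma \ref{lemivp}. The remaining steps are routine bookkeeping once the Stein solution bounds are in hand, and the scaling $\sigma^2\cdot(2/\sigma^2)=2$ in the first error term and $(2/\sigma^2)\cdot(1/2)=1/\sigma^2$ in the second recovers the precise numerical constants in the claimed bound.
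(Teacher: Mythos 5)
Your derivation is correct. Note, however, that the paper does not actually prove this theorem: it is cited to Reinert \cite{r98} and dispatched in a one-line remark (``Theorem \ref{thmapw} is formulated slightly differently in Theorem 2.1 of \cite{r98}, but by re-scaling we obtain the bound (\ref{thmapwd})''). So you are supplying a self-contained proof where the paper supplies none. Your argument --- Stein's method for $N(0,\sigma^2)$ with the leave-one-out variable $V_n^{(i)}=V_n-X_i/\sqrt{n}$, a first-order Taylor expansion of $g(V_n)-g(V_n^{(i)})$ about $V_n^{(i)}$, independence of $X_i^2$ and $V_n^{(i)}$ to convert the leading Taylor term into $\frac{\sigma^2}{n}\sum_i\mathbb{E}[g'(V_n^{(i)})]$, and then a second use of $\|g''\|$ to pass from $g'(V_n^{(i)})$ back to $g'(V_n)$ --- is the standard Lindeberg-type Stein calculation, and it is in fact the sum-of-i.i.d.\ specialisation of the coupling argument Reinert gives in \cite{r98}. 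The two applications of $\|g''\|\leq 2/\sigma^2$ together with $\mathbb{E}|X_i|\leq\sigma$ (Cauchy--Schwarz) recover the constants $2$ and $1/\sigma^3$ exactly as in (\ref{minrev}), and passing to the supremum over $\mathcal{H}_{\mathrm{W}}$ gives (\ref{thmapwd}). The only small caveat is presentational: you describe the bound $\|g''\|\leq 2/\sigma^2$ (equivalently, $\|g''\|\leq 2\|h'\|$ for the standard normal) as requiring ``careful work paralleling the Laplace calculation,'' but it is in fact a classical estimate due to Stein (1986), available off the shelf; you do not need to rederive it. With that bound granted, your proof is complete and correct.
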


\begin{theorem}[Gaunt \cite{gaunt rate}] \label{thmap2} Let $X_1,\ldots,X_n$ be independent random variables with $\mathbb{E}[X_i]=0$, $\mathrm{Var}(X_i)=\sigma^2\in(0,\infty)$, $\mathbb{E}[X_i^3]=0$ and $\mathbb{E}[X_i^4]<\infty$, for all $1\leq i\leq n$. Then
\begin{equation}\label{thmap2d}d_{2}(V_n,V)\leq\frac{\sigma^2}{n^2}\sum_{i=1}^n\bigg(1+\frac{\mathbb{E}[X_i^4]}{3\sigma^4}\bigg).
\end{equation}
\end{theorem}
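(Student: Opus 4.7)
My plan is to prove this via Stein's method for the normal distribution, using the classical operator $\mathcal{A}f(x) = \sigma^2 f'(x) - xf(x)$ and its associated Stein equation $\sigma^2 f'(x) - xf(x) = \tilde h(x)$, where $\tilde h = h - \mathbb{E}[h(V)]$. The first ingredient I would establish (or cite) is a set of bounds on the derivatives of the solution $f$ in terms of $h$. When $h \in \mathcal{H}_2$, so that $\|h''\| \leq 1$, it is standard (by noting that $f''$ solves a normal Stein equation with test function $h'$, iterating, and bounding via the integral representation of the solution) that $\|f^{(3)}\| \leq \frac{2}{\sigma^2}\|h''\|$. The numerical constants in the target bound — in particular the coefficient $\tfrac{1}{3}$ — are calibrated to this precise inequality on $\|f^{(3)}\|$, so this is the one technical estimate I would need to nail down carefully.

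The second, and main, step is the standard leave-one-out decomposition: write $V_n = V_n^{(i)} + X_i/\sqrt{n}$ with $V_n^{(i)} := V_n - X_i/\sqrt n$ independent of $X_i$, and expand
\begin{equation*}
\mathbb{E}[V_n f(V_n)] = \frac{1}{\sqrt{n}}\sum_{i=1}^n \mathbb{E}\big[X_i f(V_n^{(i)} + X_i/\sqrt{n})\big]
\end{equation*}
by Taylor's theorem with integral remainder to third order in $X_i/\sqrt n$. Using independence together with $\mathbb{E}[X_i]=0$ and the crucial cancellation $\mathbb{E}[X_i^3]=0$, the zeroth-, first-, and second-order terms produce, respectively, $0$, $\tfrac{\sigma^2}{n}\sum_i \mathbb{E}[f'(V_n^{(i)})]$, and $0$, leaving an $f^{(3)}$-remainder bounded by $\|f^{(3)}\|\,\mathbb{E}[X_i^4]/(6n^2)$ per index. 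Correspondingly, I would replace $\tfrac{\sigma^2}{n}\sum_i \mathbb{E}[f'(V_n^{(i)})]$ by $\sigma^2\mathbb{E}[f'(V_n)]$ via a second-order Taylor expansion of $f'$, where the linear term again vanishes by independence and the remainder is controlled by $\|f^{(3)}\|\,\sigma^2/(2n)$ per index. Summing both remainder streams over $i$ and using $\|f^{(3)}\| \leq 2/\sigma^2$ then yields
\begin{equation*}
|\mathbb{E}[h(V_n)] - \mathbb{E}[h(V)]|
= |\mathbb{E}[\sigma^2 f'(V_n) - V_n f(V_n)]|
\leq \frac{\sigma^2}{n} + \sum_{i=1}^n\frac{\mathbb{E}[X_i^4]}{3\sigma^2 n^2},
\end{equation*}
which is precisely $\frac{\sigma^2}{n^2}\sum_{i=1}^n\big(1+\frac{\mathbb{E}[X_i^4]}{3\sigma^4}\big)$ after common denominators, and taking the supremum over $h\in\mathcal{H}_2$ gives the stated bound on $d_2(V_n,V)$.

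I expect the main obstacle to be entirely the sharp $\|f^{(3)}\| \leq \tfrac{2}{\sigma^2}\|h''\|$ estimate: the combinatorial/Taylor part is a routine vanishing-moments computation, but pushing the constant down to $2$ (rather than the often-quoted $4$ one gets from cruder arguments) is what makes the coefficient $\tfrac{1}{3}$ and the clean factor of $\tfrac{\sigma^2}{n^2}$ come out correctly. One could alternatively bypass an explicit $\|f^{(3)}\|$ bound by applying the Stein equation $\sigma^2 f^{(3)} = (xf')' + \tilde h'' = f' + xf'' + h''$ inside the error terms, trading $\|f^{(3)}\|$ for $\|f'\|,\|f''\|$, and $\|h''\|$; this is essentially equivalent but may give cleaner constants. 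Beyond this, every remaining step — the leave-one-out split, the Taylor expansions, and the application of $\mathbb{E}[X_i]=\mathbb{E}[X_i^3]=0$ — is mechanical.
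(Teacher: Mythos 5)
Your proposal is correct and follows essentially the same route as the paper: the paper obtains (\ref{thmap2d}) by rescaling Theorem 3.1 of \cite{gaunt rate} (whose proof is exactly this leave-one-out Taylor expansion in the normal Stein equation with the $\mathbb{E}[X_i^3]=0$ cancellation) and by inserting Daly's bound $\|f^{(4)}\|\leq 2\|h''\|$ for the second-order standard normal Stein equation, which after rescaling is precisely your key estimate $\|f^{(3)}\|\leq \frac{2}{\sigma^2}\|h''\|$ for the equation $\sigma^2 f'(x)-xf(x)=\tilde{h}(x)$. Your bookkeeping of the two remainder streams reproduces the constants $\frac{\sigma^2}{n}$ and $\frac{1}{3\sigma^2 n^2}\sum_i\mathbb{E}[X_i^4]$ exactly as in the stated bound.
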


\begin{remark}The Berry-Esseen Theorem \ref{thmapk}, with a larger constant $C_0$, was proved independently by Berry \cite{berry} and Esseen \cite{esseen} in the early 1940s, and since then several works have improved on the constant with the best estimate of $C_0=0.5600$ due to \cite{s10}.  For i.i.d$.$ random variables $X_1,\ldots,X_n$, the constant improves to $C_0=0.4748$ \cite{s11}.  The assumption of bounded third absolute moments can also be reduced at the expense of a slightly more complicated bound with bigger constants \cite{feller}.  Theorem \ref{thmapw} is formulated slightly differently in Theorem 2.1 of \cite{r98}, but by re-scaling we obtain the bound (\ref{thmapwd}).  This is also the case for Theorem \ref{thmap2}, and we additionally obtain an improved constant in (\ref{thmap2d}) by using the bound $\|f^{(4)}\|\leq 2\|h''\|$ (due to \cite{daly}) for the solution of the standard normal Stein equation $f''(x)-xf'(x)=h(x)-\mathbb{E}[N]$, $N\sim N(0,1)$, rather than the bound $\|f^{(4)}\|\leq3\|h''\|$ that was used in proof of Theorem 3.1 of \cite{gaunt rate}.
\end{remark}

As the Rayleigh distribution is a special case of the generalized gamma distribution, the following lemma follows as a special case of Proposition 2.3 of \cite{gaunt ngb}.

\begin{lemma}\label{lemray}Let $U$ denote a Rayleigh random variable with probability density function
$p_U(x)=2x\mathrm{e}^{-x^2}$, $x>0$. Suppose that $f:(0,\infty)\rightarrow\mathbb{R}$ is differentiable and such that $\mathbb{E}|Uf'(U)|<\infty$, $\mathbb{E}|f(U)|<\infty$ and $\mathbb{E}|U^2f(U)|<\infty$.  Then
\begin{equation*}\mathbb{E}[\mathcal{A}_Uf(U)]=0,
\end{equation*}
where $\mathcal{A}_Uf(x)=xf'(x)+(2-2x^2)f(x)$.
\end{lemma}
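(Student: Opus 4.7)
The plan is to verify the Stein identity $\mathbb{E}[\mathcal{A}_U f(U)] = 0$ directly, by integration by parts against the Rayleigh density $p_U(x) = 2xe^{-x^2}$. Writing the expectation as an integral gives
\begin{equation*}
\mathbb{E}[\mathcal{A}_U f(U)] = \int_0^\infty 2x^2 e^{-x^2} f'(x)\, dx + \int_0^\infty (4x - 4x^3) e^{-x^2} f(x)\, dx,
\end{equation*}
and both integrals converge absolutely by the hypotheses $\mathbb{E}|Uf'(U)| < \infty$, $\mathbb{E}|f(U)| < \infty$ and $\mathbb{E}|U^2 f(U)| < \infty$.

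Next I would apply integration by parts to the first integral with $u(x) = 2x^2 e^{-x^2}$ and $dv = f'(x)\, dx$, noting that $u'(x) = (4x - 4x^3)e^{-x^2}$. The interior term produced is precisely the negative of the second integral above, so the two cancel and the identity reduces to showing that the boundary term $\left[2x^2 e^{-x^2} f(x)\right]_0^\infty$ vanishes. At $x = 0$ the factor $x^2$ kills the expression, since the integrability condition $\mathbb{E}|f(U)| < \infty$ combined with $p_U(x) \sim 2x$ near $0$ rules out singularities of $f$ strong enough to compete with $x^2$. At $x = \infty$ the super-exponential decay of $x^2 e^{-x^2}$ together with the integrability hypotheses on $f$ and $U^2 f$ forces the expression to zero.

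The main obstacle is making the $x\to\infty$ boundary vanishing rigorous, since pointwise decay is not automatic from $L^1$ integrability. The cleanest workaround is to recognise $\mathcal{A}_U$ as the density-method Stein operator for $p_U$ applied to $g(x) = xf(x)$: since the score function is $p_U'(x)/p_U(x) = 1/x - 2x$, the standard density-method identity $\mathbb{E}[g'(U) + (p_U'(U)/p_U(U))g(U)] = 0$ yields $\mathbb{E}[f(U) + Uf'(U) + (1 - 2U^2)f(U)] = 0$, which is the claim. Under the stated hypotheses the function $x \mapsto 2x^2 e^{-x^2} f(x)$ is absolutely continuous on $(0,\infty)$ with integrable derivative (by the identity just established), so it has finite limits at both endpoints; the integrability of $f$ and $U^2 f$ against $p_U$ then forces these limits to be zero, completing the argument.
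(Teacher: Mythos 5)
Your argument is correct, and it takes a genuinely different route from the paper. The paper dispatches this lemma in one line, observing that the Rayleigh distribution is a special case of the generalized gamma distribution and citing Proposition 2.3 of Gaunt (2018, \emph{Braz.\ J.\ Probab.\ Stat.}); no self-contained proof is given. You instead supply a direct, elementary argument: the pointwise identity $\frac{d}{dx}\bigl[2x^2\mathrm{e}^{-x^2}f(x)\bigr]=p_U(x)\,\mathcal{A}_Uf(x)$ shows $\phi(x):=2x^2\mathrm{e}^{-x^2}f(x)$ is everywhere differentiable with derivative in $L^1(0,\infty)$ (by the three stated moment hypotheses, which are precisely $\mathbb{E}|\mathcal{A}_Uf(U)|<\infty$), hence $\phi$ is absolutely continuous with finite limits at $0$ and $\infty$; those limits must be $0$ because $\phi(x)/x$ is integrable near $0$ (that is $\mathbb{E}|f(U)|<\infty$) and $x\,\phi(x)$ is integrable near $\infty$ (that is $\mathbb{E}|U^2f(U)|<\infty$); integrating $\phi'$ over $(0,\infty)$ then gives $\mathbb{E}[\mathcal{A}_Uf(U)]=0$. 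This buys self-containedness and makes transparent exactly why the three moment conditions are what is needed, at the cost of being longer than a citation. One small point of exposition: the phrase ``the standard density-method identity \ldots yields \ldots which is the claim'' followed by ``by the identity just established'' reads as slightly circular; the clean ordering is to \emph{first} establish the pointwise derivative identity and the vanishing boundary limits, and only \emph{then} conclude the density-method identity, rather than invoking the latter as a black box before justifying it.
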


\begin{lemma}\label{lembet}Let $U_n=\sqrt{n B_{n-1}}$, where $B_{n-1}\sim \mathrm{Beta}(1,n-1)$.  Suppose that $f:(0,\sqrt{n})\rightarrow\mathbb{R}$ is differentiable and such that $\mathbb{E}|U_nf'(U_n)|<\infty$, $\mathbb{E}|U_n^3f'(U_n)|<\infty$, $\mathbb{E}|f(U_n)|<\infty$ and $\mathbb{E}|U_n^2f(U_n)|<\infty$.  Then
\begin{align}\label{beta12}\mathbb{E}[\mathcal{A}_{U_n}f(U_n)]=0.
\end{align}
where $\mathcal{A}_{U_n}f(x)=x(1-x^2/n)f'(x)+(2-2x^2)f(x)$.
\end{lemma}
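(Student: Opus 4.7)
The plan is to derive (\ref{beta12}) by writing $\mathcal{A}_{U_n}f(x)\,p_{U_n}(x)$ as an exact derivative and then applying the fundamental theorem of calculus on $(0,\sqrt{n})$. First I would compute the density of $U_n=\sqrt{nB_{n-1}}$ via the change of variables $u=\sqrt{ny}$ applied to the $\mathrm{Beta}(1,n-1)$ density $(n-1)(1-y)^{n-2}$, obtaining
\[
p_{U_n}(x)=\frac{2(n-1)}{n}\,x\left(1-\frac{x^2}{n}\right)^{n-2},\quad 0<x<\sqrt{n}.
\]

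The main computational step is to verify the density identity
\[
\frac{d}{dx}\!\left[x\left(1-\frac{x^2}{n}\right)p_{U_n}(x)\right]=(2-2x^2)\,p_{U_n}(x),
\]
which is a routine differentiation: the left-hand side equals $\frac{d}{dx}\bigl[\tfrac{2(n-1)}{n}x^2(1-x^2/n)^{n-1}\bigr]$, and after pulling out $\tfrac{4(n-1)}{n}x(1-x^2/n)^{n-2}$ the remaining bracket $1-x^2/n-(n-1)x^2/n$ collapses to $1-x^2$, producing exactly $(2-2x^2)p_{U_n}(x)$. Combined with the product rule, this identity yields
\[
\frac{d}{dx}\!\left[x\left(1-\frac{x^2}{n}\right)p_{U_n}(x)f(x)\right]=\mathcal{A}_{U_n}f(x)\,p_{U_n}(x),
\]
so that $\mathbb{E}[\mathcal{A}_{U_n}f(U_n)]=\bigl[x(1-x^2/n)p_{U_n}(x)f(x)\bigr]_0^{\sqrt{n}}$.

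The final step is to justify that both boundary terms vanish. At $x=0$ the prefactor contains $x\cdot p_{U_n}(x)\sim x^2$, while at $x=\sqrt{n}$ it carries the factor $(1-x^2/n)\cdot p_{U_n}(x)$, i.e.\ $(1-x^2/n)^{n-1}$ times a bounded quantity. This boundary check is the only real obstacle. The integrability hypotheses on $f$ are tailored precisely so that $f$ cannot overpower these polynomial decays near the endpoints; more invariantly, the four moment bounds in the statement are exactly what is required to make $\mathcal{A}_{U_n}f(U_n)\in L^1$, which in turn legitimates the manipulation above and forces the boundary values to be $0$. Equivalently, one can bypass the exact-derivative viewpoint and integrate $\int_0^{\sqrt{n}}x(1-x^2/n)f'(x)p_{U_n}(x)\,dx$ by parts using the same density identity; the boundary contribution vanishes under the stated hypotheses and the remaining integral rearranges directly to (\ref{beta12}).
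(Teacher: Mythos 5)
Your proof is correct, and it takes a genuinely different route from the paper. The paper does not compute the density of $U_n$ at all: it invokes equation (15) of \cite{GMS16}, which (via the algebra of Stein operators built from the operator $T_r y(x)=xy'(x)+ry(x)$) gives a Stein operator $T_2 g(x)-x^2T_{2n}g(x)$ for the square-root transform $C_n=B_{n-1}^{1/2}$, and then obtains $\mathcal{A}_{U_n}$ by the linear rescaling $U_n=\sqrt{n}\,C_n$, $g(c)=f(\sqrt{n}\,c)$. Your approach is the direct density method: you compute $p_{U_n}$, verify the Stein-density identity $\big(x(1-x^2/n)p_{U_n}(x)\big)'=(2-2x^2)p_{U_n}(x)$, and integrate the resulting exact derivative. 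The trade-off: the paper's route is shorter and slots into a general algebraic framework, but imports a black box whose hypotheses the paper itself admits are not explicitly stated in \cite{GMS16}; your route is longer but self-contained and makes it transparent where each of the four moment hypotheses is used. Both land on the same operator, as a sanity check of my own confirms (note the paper's displayed equation preceding the rescaling has a typo, $2n$ in place of $2$ in the zeroth-order coefficient).

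One point you should tighten: the assertion that $\mathcal{A}_{U_n}f(U_n)\in L^1$ \emph{forces} the boundary values of $G(x):=x(1-x^2/n)p_{U_n}(x)f(x)$ to vanish is not quite the right logic. Integrability of $G'=\mathcal{A}_{U_n}f\cdot p_{U_n}$ only guarantees that $G(0^+)$ and $G(\sqrt{n}^-)$ \emph{exist}. What kills them is the separate hypothesis $\mathbb{E}|f(U_n)|<\infty$: if $G(0^+)=c\neq0$ then $f(x)\sim \tfrac{cn}{2(n-1)}x^{-2}$ and $|f(x)|p_{U_n}(x)\sim |c|/x$ is non-integrable at $0$; if $G(\sqrt{n}^-)=c\neq0$ then $f(x)\sim\tfrac{c}{2(n-1)}(1-x^2/n)^{-(n-1)}$ and $|f(x)|p_{U_n}(x)\sim |c|n^{-1/2}(1-x^2/n)^{-1}$ is non-integrable at $\sqrt{n}$. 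Both contradict $\mathbb{E}|f(U_n)|<\infty$, so $G(0^+)=G(\sqrt{n}^-)=0$. Inserting this short argument in place of the phrase ``forces the boundary values to be $0$'' makes the proof complete.
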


\begin{proof}Define the operator $T_r$ by $T_ry(x)=xy'(x)+ry(x)$, $r\in\mathbb{R}$.  In this notation, the classical Stein operator for the $\mathrm{Beta}(1,n-1)$ distribution is given by $A_{B_{n-1}}y(x)=T_1y(x)-xT_{n}y(x)$ \cite{dobler beta, goldstein4}.  Let $C_{n}=B_{n-1}^{1/2}$ and let $g:(0,1)\rightarrow\mathbb{R}$ by such that $\mathbb{E}|C_{n}g'(C_{n})|<\infty$, $\mathbb{E}|C_{n}^3g'(C_{n})|<\infty$, $\mathbb{E}|g(C_{n})|<\infty$ and $\mathbb{E}|C_{n}^2g(C_{n})|<\infty$.  Then, by equation (15) of \cite{GMS16},
\begin{equation}\label{hold}\mathbb{E}[T_2g(C_{n})-C_{n}^2T_{2n}g(C_{n})]=0.
\end{equation}
(The conditions on $g$ that are stated above are not specified in \cite{GMS16}, but on examining their analysis one can see that these conditions ensure that (\ref{hold}) holds.) That is 
\begin{equation}\label{beta123}\mathbb{E}\big[C_n(1-C_n^2)g'(C_n)+(2n-2nC_n^2)g(C_n)\big]=0.
\end{equation}
We have that $U_n=_d\sqrt{n}C_n$, and on rescaling we deduce (\ref{beta12}) from (\ref{beta123}).
\end{proof}

In the following lemma, the bound (\ref{pl12}) is proved purely for reasons of exposition, as an improved bound will be stated in Remark \ref{yvik}.  Proving both the Kolmogorov and Wasserstein distance bounds requires very little more work than only proving the Wasserstein distance bound.

\begin{lemma}\label{tper}Let the random variables $U_n$ and $U$ be defined as above.  Then, for $n\geq2$,
\begin{eqnarray}\label{pl12}d_{\mathrm{K}}(U_n,U)&\leq&\frac{2}{n}, \\
\label{pl14}d_{\mathrm{W}}(U_n,U)&\leq&\frac{11.49}{n}.
\end{eqnarray}
\end{lemma}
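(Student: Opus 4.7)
The plan is to handle both bounds through Stein's method for Rayleigh approximation, based on the two Stein identities established in Lemmas \ref{lemray} and \ref{lembet}. For a test function $h$, let $f$ denote the solution to the Rayleigh Stein equation $\mathcal{A}_U f(x) = h(x) - \mathbb{E}[h(U)]$, whose detailed properties will be developed in Section \ref{sec5}. The two Stein operators differ only in the coefficient of $f'$, so
\[(\mathcal{A}_U - \mathcal{A}_{U_n})f(x) = xf'(x) - x(1 - x^2/n)f'(x) = \frac{x^3}{n}f'(x).\]
Evaluating at $U_n$, taking expectations, and invoking both Stein identities produces the master identity
\[\mathbb{E}[h(U_n)] - \mathbb{E}[h(U)] = \mathbb{E}[\mathcal{A}_U f(U_n)] - \mathbb{E}[\mathcal{A}_{U_n} f(U_n)] = \frac{1}{n}\mathbb{E}[U_n^3 f'(U_n)].\]

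For the Wasserstein bound (\ref{pl14}), I specialise to $h$ Lipschitz with $\|h'\| \leq 1$ and combine the Rayleigh Stein solution bounds from Section \ref{sec5} with moments of $U_n$. The moment identity $\mathbb{E}[U_n^r] = n^{r/2}\Gamma(1 + r/2)\Gamma(n)/\Gamma(n + r/2)$ follows from the Beta moment formula, and these moments are uniformly bounded in $n$ with limit $\mathbb{E}[U^r] = \Gamma(1 + r/2)$; in particular $\mathbb{E}[U_n^2] = 1$ and $\mathbb{E}[U_n^3] \leq 3\sqrt{\pi}/4$. A suitable bound on $\|f'\|$ or $\|xf'(x)\|$ combined with the appropriate moment of $U_n$ will then yield $|\mathbb{E}[U_n^3 f'(U_n)]| \leq C$ uniformly in $n$, producing the claimed $1/n$ rate with the explicit constant $11.49$ after numerical optimisation.

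For the Kolmogorov bound (\ref{pl12}) I take the direct CDF route, which gives $2/n$ with essentially no machinery beyond elementary calculus. The CDF difference is $-\psi(u^2/n)$ on $(0, \sqrt{n})$ with $\psi(t) := e^{-nt} - (1-t)^{n-1}$, and is $-e^{-u^2}$ on $[\sqrt{n}, \infty)$. Differentiating gives $\psi'(t) = -n e^{-nt} + (n-1)(1-t)^{n-2}$, and substituting the critical-point relation $(1-t^*)^{n-2} = (n/(n-1))e^{-nt^*}$ back into $\psi$ yields the compact form
\[\psi(t^*) = e^{-nt^*} - (1-t^*)\cdot \frac{n}{n-1}e^{-nt^*} = \frac{(nt^* - 1)e^{-nt^*}}{n-1}.\]
The elementary estimates $(1-y)e^{-y} \leq 1$ for $y \in [0,1]$ and $(y-1)e^{-y} \leq e^{-2}$ for $y \geq 1$ give $|\psi(t^*)| \leq 1/(n-1) \leq 2/n$ for $n \geq 2$; the boundary values $\psi(0) = 0$, $\psi(1) = e^{-n}$, and the tail contribution $e^{-n}$ for $u \geq \sqrt{n}$ are all trivially smaller than $2/n$.

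The main obstacle will be producing the explicit constant $11.49$ in the Wasserstein bound, which is entirely a matter of the Rayleigh Stein equation analysis deferred to Section \ref{sec5}. The derivation there must handle the coordinate singularity of $\mathcal{A}_U$ at $x = 0$, where the coefficient $x$ of $f'$ vanishes, so that controlling $f$ and its derivatives on a neighbourhood of the origin requires delicate work with the explicit integral representation of $f$. The Kolmogorov bound, being designed only for exposition (an improvement follows in Remark \ref{yvik}), is elementary once the critical-point identity for $\psi$ is in hand.
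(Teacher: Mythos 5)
Your Wasserstein argument is essentially identical to the paper's: the same master identity $\mathbb{E}[h(U_n)] - \mathbb{E}[h(U)] = \frac{1}{n}\mathbb{E}[U_n^3 f'(U_n)]$ obtained by differencing the two Stein operators, the same moment evaluations $\mathbb{E}[U_n^2]=1$ and $\mathbb{E}[U_n^3]\leq 3\sqrt{\pi}/4$ (the latter via monotonicity of $n^{3/2}\Gamma(n)/\Gamma(n+3/2)$), and the constant $11.49$ coming from $\|f'\|\leq 8.6408\|h'\|$ for the Rayleigh Stein solution deferred to Section \ref{sec5}, since $8.6408\times 3\sqrt{\pi}/4\approx 11.487$.

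For the Kolmogorov bound, however, you take a genuinely different route. The paper applies the very same Stein-method identity and bounds $\frac{1}{n}|\mathbb{E}[U_n^3 f'(U_n)]|\leq\frac{1}{n}\|xf'(x)\|\,\mathbb{E}[U_n^2]\leq\frac{2}{n}$, using $\|xf'(x)\|\leq 2\|h-\mathbb{E}[h(U)]\|$ from Lemma \ref{propapp3}; you instead attack the CDF difference $e^{-u^2}-(1-u^2/n)^{n-1}$ directly (your signs are flipped but immaterial for the Kolmogorov distance). Your substitution $t=u^2/n$, critical-point identity $\psi(t^*)=(nt^*-1)e^{-nt^*}/(n-1)$, and the elementary estimates $(1-y)e^{-y}\leq 1$ on $[0,1]$ and $(y-1)e^{-y}\leq e^{-2}$ on $[1,\infty)$ are all correct and close the argument (together with the boundary checks $\psi(0)=0$, $\psi(1)=e^{-n}$, and $e^{-u^2}\leq e^{-n}$ on $[\sqrt{n},\infty)$), giving $1/(n-1)\leq 2/n$ for $n\geq 2$. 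What each buys: your calculus route avoids the Rayleigh Stein machinery entirely, which is arguably simpler; the paper's choice is deliberate because the Kolmogorov bound is explicitly there for exposition, to show the Stein framework handles both metrics uniformly with essentially no extra work once (\ref{near3}) is in hand, and because the bound $\|xf'(x)\|\leq 2$ is in any case needed to justify $\mathbb{E}[\mathcal{A}_{U_n}f(U_n)]=0$.
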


\begin{proof}Let the Stein operators $A_U$ and $A_{U_n}$ be defined as in Lemmas \ref{lemray} and \ref{lembet}, respectively.  Suppose that $h:(0,\infty)\rightarrow\mathbb{R}$ is either bounded or Lipschitz.  Let $f$ be the solution of the $\mathrm{Rayleigh}(1/\sqrt{2})$ Stein equation $A_Uf(x)=h(x)-\mathbb{E}[h(U)]$, which by Lemma \ref{propapp3}, we know satisfies the bounds
\begin{align}\label{pl78}\|xf'(x)\|&\leq \frac{2}{2^{-1}}\times\frac{1}{2}\|h-\mathbb{E}[h(U)]\|\leq2, \quad h\in\mathcal{H}_{\mathrm{K}}, \\
\label{pl79}\|f'\|&\leq\frac{6.11}{2^{-3/2}}\times \frac{1}{2} \|h'\|\leq8.6408, \quad h\in\mathcal{H}_{\mathrm{W}}.
\end{align}
  Then 
\begin{align}|\mathbb{E}[h(U_n)]-\mathbb{E}[h(U)]|&=|\mathbb{E}[\mathcal{A}_{U}f(U_n)]|=|\mathbb{E}[\mathcal{A}_{U}f(U_n)-\mathcal{A}_{U_n}f(U_n)]| \nonumber \\
&=\frac{1}{n}|\mathbb{E}[U_n^3f'(U_n)] | \nonumber\\
\label{near3}&\leq \frac{1}{n}\min\Big\{\|xf'(x)\|\mathbb{E}[U_n^2],\|f'\|\mathbb{E}[U_n^3]\Big\}.
\end{align}
That $\mathbb{E}[\mathcal{A}_{U_n}f(U_n)]=0$ follows from the assumptions on $h$ and the estimates of Lemma \ref{propapp3} for the solution of the Rayleigh Stein equation.   Now, $\mathbb{E}[U_n^2]=1$ and  
\begin{align}\mathbb{E}[U_n^{3}]&=n^{3/2}\mathbb{E}[B_{n-1}^{3/2}]=n^{3/2}\int_0^1(n-1)x^{3/2}(1-x)^{n-2}\,\mathrm{d}t=n^{3/2}(n-1) B\big(\tfrac{5}{2},n-1\big)\nonumber\\
\label{un3un}&=n^{3/2}(n-1)\frac{\Gamma(5/2)\Gamma(n-1)}{\Gamma(n+3/2)}=\frac{3\sqrt{\pi}n^{3/2}\Gamma(n)}{4\Gamma(n+3/2)},
\end{align}
where $B(a,b)=\int_0^1 x^{a-1}(1-x)^{b-1}\,\mathrm{d}x$ is the beta function, and we used the standard formulas $u\Gamma(u)=\Gamma(u+1)$ and $\Gamma(5/2)=3\sqrt{\pi}/4$.  Now $n^{3/2}\Gamma(n)/\Gamma(n+3/2)$ is an increasing function of $n$ on $(0,\infty)$ \cite{ismail}.  Therefore, for $n\geq2$,
\begin{align*}\mathbb{E}[U_n^{3}]\leq\frac{3\sqrt{\pi}}{4}\lim_{n\rightarrow\infty}\frac{n^{3/2}\Gamma(n)}{\Gamma(n+3/2)}=\frac{3\sqrt{\pi}}{4},
\end{align*}
where the limit follows from \cite[formula (5.6.4)]{olver}.   Applying the bounds (\ref{pl78}) and (\ref{pl79}) together with the bounds for $\mathbb{E}[U_n^2]$ and $\mathbb{E}[U_n^3]$ to (\ref{near3}) then yields the bounds (\ref{pl12}) and (\ref{pl14}).
\end{proof}

\begin{remark}\label{yvik}The following bounds will appear in the supplementary material of the arXiv version of the preprint \cite{es19}.  For $n\geq2$,
\begin{align}\label{eskol}d_{\mathrm{K}}(U_n,U)\leq \frac{1}{n}\bigg(1+2\bigg(1-\frac{2}{n}\bigg)^{n-2}\bigg),
\end{align}
and
\begin{align}&d_{\mathrm{W}}(U_n,U)\leq-\frac{\sqrt{\pi}\Gamma(n)}{4\sqrt{n}\Gamma(n+1/2)}\nonumber\\
\label{esw}&\quad+2\sqrt{2}\frac{n-1}{n^n}\cdot\frac{(n-2)^nn(40+11(n-4)n)+(n-2)^3n^n {}_2F_1(-\frac{1}{2},3-n;\frac{1}{2};\frac{2}{n})}{(n-2)^2(2n-5)(2n-3)(2n-1)},
\end{align}
where ${}_2F_1(a,b;c;x)$ is the Gaussian hypergeometric function. (We define $0^0:=1$, but this is irrelevant because the bound (\ref{eskol}) is greater than 1 in this case.) These bounds were obtained using a recent technique of \cite{es19} for bounding distances between distributions that builds upon the formalism of \cite{ers19} for new representations of solutions to Stein equations.  For another recent approach to bounding distances between distributions, see \cite{dsw19}.

  Our Kolmogorov distance bound (\ref{pl12}) outperforms (\ref{eskol}) when $n=2$ (although in this case the upper bound of 1 is trivial), but for all $n\geq3$ the reverse is true.  Numerical calculations carried using \emph{Mathematica} suggest that the Wasserstein bound (\ref{esw}) improves on our bound (\ref{pl14}) for all $n\geq2$, although verifying this assertion analytically seems to be difficult.  Our bound is of course much simpler and the dependence on $n$ is very clear.  For this reason, we will use the bound (\ref{pl14}) in our proof of Theorem \ref{thm888}.
\end{remark}


\noindent{\emph{Proof of Theorem \ref{thm888}.}} Recall that $T_n=_d U_nV_n$ and $Z=_dUV$.  Then, by Lemma \ref{uvuv},
\begin{eqnarray}\label{d12d127}d_{\mathrm{K}}(T_n,Z)&\leq& d_{\mathrm{K}}(U_n,U)+d_{\mathrm{K}}(V_n,V), \\
\label{d12d126}d_{\mathrm{W}}(T_n,Z)&\leq& \mathbb{E}|V|d_{\mathrm{W}} (U_n,U)+\mathbb{E}[U_n]d_{\mathrm{W}}(V_n,V), \\
\label{d12d125}d_{1,2}(T_n,Z)&\leq& \mathbb{E}|V|d_{\mathrm{W}} (U_n,U)+\mathbb{E}[U_n^2]d_{2}(V_n,V).
\end{eqnarray}
By standard formulas for the moments and absolute moments of the beta and normal distributions, we have that $\mathbb{E}[U_n^2]=1$ and $\mathbb{E}|V|=\sigma\sqrt{2/\pi}$.  Also, by a similar calculation to the one used to obtain the formula (\ref{un3un}) we have, for $n\geq2$,
\begin{align*}\mathbb{E}[U_n]=\frac{\sqrt{\pi}\sqrt{n}\Gamma(n)}{2\Gamma(n+1/2)}\leq\frac{\sqrt{\pi}\sqrt{2}\Gamma(2)}{2\Gamma(5/2)}=\frac{2\sqrt{2}}{3},
\end{align*}
where we used that $\sqrt{n}\Gamma(n)/\Gamma(n+1/2)$ is a decreasing function of $n$ on $(0,\infty)$ \cite{ismail}.  Theorems \ref{thmapk} -- \ref{thmap2} give bounds for $d_{\mathrm{K}}(V_n,V)$, $d_{\mathrm{W}}(V_n,V)$ and $d_{2}(V_n,V)$, respectively, and $d_{\mathrm{K}}(U_n,U)$ is bounded by inequality (\ref{eskol}) and $d_{\mathrm{W}}(U_n,U)$ is bounded by inequality (\ref{pl14}).  Substituting all of these estimates into (\ref{d12d127}), (\ref{d12d126}) and (\ref{d12d125}) then yields the bounds as stated in Theorem \ref{thm888}. \hfill $\Box$

\section{The Rayleigh Stein equation}\label{sec5}

Let $R\sim \mathrm{Rayleigh}(\sigma)$, $\sigma>0$, follow the Rayleigh distribution with density function
\[\rho_R(x)=\frac{x}{\sigma^2}\mathrm{e}^{-x^2/(2\sigma^2)}, \quad x>0.\]
The Rayleigh distribution is a special case of the chi distribution (up to scaling).  A random variable $K$ following the chi distribution with $k>0$ degrees of freedom, denoted by $\chi_{(k)}$, has probability density function
\begin{equation*}\rho_k(x)=\frac{1}{2^{k/2-1}\Gamma(k/2)}x^{k-1}\mathrm{e}^{-x^2/2}, \quad x>0.
\end{equation*}
We proceed by obtaining bounds for the solution of the chi distribution Stein equation, before specialising to the solution of the Rayleigh Stein equation.

We first note that the density $\rho_k$ satisfies the differential equation
\begin{equation}\label{pode}\big(s(x)\rho(x)\big)'=\tau(x)\rho(x),
\end{equation}
where $s(x)=x$ and $\tau(x)=k-x^2$.  It therefore follows from Theorem 1 of \cite{schoutens} that a Stein equation for the $\chi_{(k)}$ distribution is given by
\begin{equation}\label{chisteineqn}xf'(x)+(k-x^2)f(x)=h(x)-\mathbb{E} [h(K)],
\end{equation}
where $K\sim\chi_{(k)}$.  It is straightforward to solve (\ref{chisteineqn}) (see Proposition 1 of \cite{schoutens}):
\begin{align}\label{chisoln}f(x)&=\frac{1}{x\rho_k(x)}\int_0^x (h(t)-\mathbb{E} [h(K)])\rho_k(t)\,\mathrm{d}t, \\
\label{chisoln2}&=-\frac{1}{x\rho_k(x)}\int_x^\infty (h(t)-\mathbb{E} [h(K)])\rho_k(t)\,\mathrm{d}t.
\end{align}

In order to bound the solution (\ref{chisoln}) and its first derivative, it will be useful to note the following straightforward extension of Lemmas 1 and 3 of \cite{schoutens2}.

\begin{lemma}\label{propapp1}Let $\rho$ be the probability density function of a random variable $Y$, supported on $(a,b)$, which satisfies the differential equation (\ref{pode}), where $s(x)$ is a polynomial of degree no greater than two and $\tau(x)$ is monotonic in $(a,b)$ with exactly one sign change at the point $m\in(a,b)$. Let $h:(a,b)\rightarrow\mathbb{R}$ be bounded.  Then, the solution of the Stein equation $s(x)f'(x)+\tau(x)f(x)=h(x)-\mathbb{E} h(Y)$, as given by $f(x)=\frac{1}{s(x)\rho(x)}\int_a^x(h(t)-\mathbb{E} [h(Y)])\rho(t)\,\mathrm{d}t$, satisfies the bounds
\begin{align}\label{bdd1}\|f\|&\leq M\|h-\mathbb{E} [h(Y)]\|,\\
\label{bbd2}\|s(x)f'(x)\|&\leq 2\|h-\mathbb{E} [h(Y)]\|,
\end{align}
where
\[M=\frac{1}{s(m)\rho(m)}\max\{F(m),1-F(m)\},\]
with $F$ denoting the distribution function of $Y$.
\end{lemma}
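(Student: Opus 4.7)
The plan is to work directly from the two equivalent representations
\[f(x)=\frac{1}{s(x)\rho(x)}\int_a^x \tilde h(t)\rho(t)\,\mathrm{d}t=-\frac{1}{s(x)\rho(x)}\int_x^b \tilde h(t)\rho(t)\,\mathrm{d}t,\]
where $\tilde h(t)=h(t)-\mathbb{E}[h(Y)]$, using that the identities $\int_a^b\tilde h(t)\rho(t)\,\mathrm{d}t=0$ and (integrating the ODE $(s\rho)'=\tau\rho$ from $a$, noting the standard boundary behaviour $\lim_{x\downarrow a}s(x)\rho(x)=\lim_{x\uparrow b}s(x)\rho(x)=0$)
\[s(x)\rho(x)=\int_a^x \tau(t)\rho(t)\,\mathrm{d}t=-\int_x^b \tau(t)\rho(t)\,\mathrm{d}t\]
hold. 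Without loss of generality, assume $\tau$ is decreasing on $(a,b)$ with $\tau>0$ on $(a,m)$ and $\tau<0$ on $(m,b)$ (the opposite convention is handled identically up to signs).

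For the bound on $\|f\|$, I would split into $x\leq m$ and $x\geq m$. For $x\leq m$, using the first representation and $\tilde h\leq\|\tilde h\|$ gives
\[|f(x)|\leq\frac{\|\tilde h\|F(x)}{s(x)\rho(x)}=\|\tilde h\|\cdot\frac{\int_a^x\rho(t)\,\mathrm{d}t}{\int_a^x\tau(t)\rho(t)\,\mathrm{d}t}.\]
Since $1/\tau(t)$ is increasing on $(a,m)$, a standard ratio-of-integrals lemma shows that the right-hand factor is increasing in $x$ on $(a,m]$, so it attains its maximum there at $x=m$, giving $|f(x)|\leq\|\tilde h\|F(m)/(s(m)\rho(m))$. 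For $x\geq m$, the second representation combined with $s(x)\rho(x)=\int_x^b(-\tau(t))\rho(t)\,\mathrm{d}t$ and the fact that $1/(-\tau(t))$ is decreasing on $(m,b)$ yields $(1-F(x))/(s(x)\rho(x))$ decreasing on $[m,b)$, hence the maximum at $x=m$ is $(1-F(m))/(s(m)\rho(m))$. Combining the two cases produces the constant $M$ in (\ref{bdd1}).

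For the bound on $\|s(x)f'(x)\|$, the Stein equation gives $s(x)f'(x)=\tilde h(x)-\tau(x)f(x)$, so by the triangle inequality it suffices to prove $|\tau(x)f(x)|\leq\|\tilde h\|$. This is where the monotonicity of $\tau$ enters more sharply: for $x<m$, bounding $|f(x)|$ via the first representation gives
\[|\tau(x)f(x)|\leq\frac{\|\tilde h\|\,\tau(x)\int_a^x\rho(t)\,\mathrm{d}t}{\int_a^x\tau(t)\rho(t)\,\mathrm{d}t}\leq\|\tilde h\|,\]
since $\tau(t)\geq\tau(x)$ for $t<x<m$. The analogous estimate for $x>m$ uses $-\tau(t)\geq-\tau(x)$ for $t>x>m$ in the second representation. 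Assembling these yields (\ref{bbd2}).

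The main obstacles are minor: verifying that the hypotheses on $s$ and $\tau$ (polynomial $s$ of degree $\leq 2$, monotone $\tau$ with a single sign change) really do imply the boundary conditions $\lim_{x\downarrow a}s(x)\rho(x)=0=\lim_{x\uparrow b}s(x)\rho(x)$ via integrability of $\rho$ and standard tail decay, and making rigorous the ratio-of-integrals monotonicity used to push the suprema to $x=m$. Both of these essentially follow Schoutens \cite{schoutens2}, so the lemma is really a clean repackaging rather than a substantively new computation.
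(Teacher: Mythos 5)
Your proof is correct and follows essentially the same route the paper intends: the paper itself does not spell out a proof, but rather cites Lemmas 1 and 3 of Schoutens \cite{schoutens2} and Konzou--Koudou \cite{kk18}, remarking that the only property of $\tau$ used there is monotonicity with a single sign change at $m$; your sketch reconstructs precisely that argument. In particular, the two key ingredients you isolate -- the identity $s(x)\rho(x)=\int_a^x\tau\rho=-\int_x^b\tau\rho$ obtained by integrating $(s\rho)'=\tau\rho$ with the standard boundary decay, and the pointwise inequalities $\tau(t)\geq\tau(x)$ on $(a,x)$ and $-\tau(t)\geq-\tau(x)$ on $(x,b)$ implying both the monotonicity of $F(x)/(s(x)\rho(x))$ (respectively $(1-F(x))/(s(x)\rho(x))$) and the bound $|\tau(x)f(x)|\leq\|\tilde h\|$ -- are exactly the content of Schoutens' Lemmas 1 and 3, and your observation that everything passes through for general monotone $\tau$ with one sign change is exactly the generalisation noted in the Remark following the lemma. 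The one place to be a touch more careful, which you flag yourself, is that the boundary conditions $s(a^+)\rho(a^+)=s(b^-)\rho(b^-)=0$ should be verified from the Pearson-type hypotheses ($s$ polynomial of degree $\leq 2$, $\tau$ monotone with one sign change, $\rho$ a density); this holds in all the applications in the paper (chi/Rayleigh) and is standard, but strictly speaking it is an assumption inherited from \cite{schoutens2} rather than a consequence of the stated hypotheses alone.
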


\begin{remark}The bound (\ref{bdd1}) is a generalisation of the corresponding bound of Lemma 1 of \cite{schoutens2}, which is only given for the case that $\tau(x)=a(\mathbb{E} [Y]-x)$, where $a\not=0$.  The crucial feature of this function that is exploited in the proof of \cite{schoutens2} is that $\tau(x)$ is monotonic with exactly one sign change at $x=\mathbb{E} [Y]$.  As noted by \cite{kk18}, we can therefore extend the result of \cite{schoutens2} to any $\tau(x)$ that is monotonic with only one change of sign.
\end{remark}

\begin{lemma}\label{propapp2}Let $f:(0,\infty)\rightarrow\mathbb{R}$ denote the solution (\ref{chisoln}) of the Stein equation (\ref{chisteineqn}).  Let $h:(0,\infty)\rightarrow\mathbb{R}$ be bounded.  Then
\begin{align}\label{bound10}\|f\|&\leq\frac{\Gamma(k/2)\mathrm{e}^{k/2}}{2(k/2)^{k/2}}\|h-\mathbb{E} [h(K)]\|, \\
\label{bound20}\|xf'(x)\|&\leq 2\|h-\mathbb{E} [h(K)]\|.
\end{align}
\end{lemma}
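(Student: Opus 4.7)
The plan is to deduce both bounds as direct specialisations of Lemma \ref{propapp1}, applied with $s(x)=x$ and $\tau(x)=k-x^2$, so the main work is verifying the hypotheses and carrying out the computation of the constant.

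First I would check that Lemma \ref{propapp1} applies on the interval $(a,b)=(0,\infty)$. The density $\rho_k$ satisfies the Pearson-type equation (\ref{pode}) with the above $s$ and $\tau$: indeed, $(x\rho_k(x))' = k\rho_k(x) - x^2\rho_k(x)$, as follows from logarithmic differentiation of $\rho_k(x)=c_k x^{k-1}e^{-x^2/2}$. The function $s(x)=x$ is a polynomial of degree $1\leq 2$, and $\tau(x)=k-x^2$ is strictly decreasing on $(0,\infty)$ with a single sign change at $m=\sqrt{k}\in(0,\infty)$. Hence the hypotheses of Lemma \ref{propapp1} are met.

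For the bound (\ref{bound20}) on $\|xf'(x)\|$, inequality (\ref{bbd2}) applied with $s(x)=x$ gives immediately
\[
\|xf'(x)\| \leq 2\|h-\mathbb{E}[h(K)]\|.
\]
For the uniform bound (\ref{bound10}) on $\|f\|$, I would use inequality (\ref{bdd1}) and compute the constant $M=\frac{1}{s(m)\rho_k(m)}\max\{F(m),1-F(m)\}$ at $m=\sqrt{k}$. Since $s(m)\rho_k(m)=\sqrt{k}\cdot\frac{1}{2^{k/2-1}\Gamma(k/2)}k^{(k-1)/2}\mathrm{e}^{-k/2}=\frac{k^{k/2}\mathrm{e}^{-k/2}}{2^{k/2-1}\Gamma(k/2)}$, a little rearrangement gives $\frac{1}{s(m)\rho_k(m)}=\frac{\Gamma(k/2)\mathrm{e}^{k/2}}{2(k/2)^{k/2}}$. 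Finally I would use the trivial estimate $\max\{F(m),1-F(m)\}\leq 1$, which yields exactly (\ref{bound10}).

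There is no real obstacle here: once Lemma \ref{propapp1} is in hand, the proof is essentially bookkeeping. The only delicate points are (i) confirming that $\tau(x)=k-x^2$ is monotonic with exactly one sign change on $(0,\infty)$ so that Lemma \ref{propapp1} is applicable (this is the role played by $\mathbb{E}[Y]$ in the original version of \cite{schoutens2} and by the more general $m$ in the extension cited from \cite{kk18}), and (ii) the algebraic simplification of $s(m)\rho_k(m)$ into the form stated on the right-hand side of (\ref{bound10}).
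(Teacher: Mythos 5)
Your proposal is correct and follows exactly the same route as the paper: specialise Lemma \ref{propapp1} with $s(x)=x$, $\tau(x)=k-x^2$, $m=\sqrt{k}$, use the trivial bound $\max\{F(m),1-F(m)\}\leq1$, and simplify $\frac{1}{s(m)\rho_k(m)}$. The paper states this more tersely; your version just spells out the algebra, which checks out.
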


\begin{proof}Bounds (\ref{bound10}) and (\ref{bound20}) follow easily from Lemma \ref{propapp1}; note that $\tau(x)=k-x^2$ satisfies the assumption of the lemma.   To apply the lemma, we note that here $m=\sqrt{k}$, being the positive solution to the equation $k-x^2=0$; $s(x)=x$; and we used the trivial bound $\max\{F(m),1-F(m)\}\leq1$. 
\end{proof}

We now specialise to the case $k=2$, which corresponds to the Rayleigh distribution.

\begin{lemma}\label{propapp3}Let $f$ denote the solution of the Rayleigh Stein equation $\sigma^2xf'(x)+(2\sigma^2-x)f(x)=h(x)-\mathbb{E} [h(R)]$, where $R\sim\mathrm{Rayleigh}(\sigma)$. Let $h:(0,\infty)\rightarrow\mathbb{R}$ be bounded.  Then
\begin{align}\label{bound1}\|f\|&\leq \frac{\mathrm{e}}{2\sigma^2}\|h-\mathbb{E} [h(R)]\|, \\
\label{bound2}\|xf'(x)\|&\leq \frac{2}{\sigma^2}\|h-\mathbb{E} [h(R)]\|.
\end{align}
Now suppose that $h$ is Lipschitz.  Then
\begin{align}\label{bound3}\|xf(x)\|&\leq\frac{2.325}{\sigma}\|h'\|, \\
\label{bound4}\|f'\|&\leq \frac{6.11}{\sigma^3}\|h'\|, \\
\label{bound5}\|xf''(x)\|&\leq \frac{11.30}{\sigma^3}\|h'\|.
\end{align}
\end{lemma}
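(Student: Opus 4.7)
The plan is to treat the bounded case and the Lipschitz case separately. For the bounded case, bounds \eqref{bound1} and \eqref{bound2} follow from Lemma \ref{propapp2} with $k=2$ by a routine rescaling. Explicitly, if $K\sim\chi_{(2)}$ and $\tilde f$ denotes the solution of the $\chi_{(2)}$ Stein equation with test function $h(\sigma\,\cdot\,)$, then substituting $u=x/\sigma$ shows that $f(x)=\tilde f(x/\sigma)/\sigma^2$ solves the Rayleigh$(\sigma)$ Stein equation. Since the supremum norm is preserved under argument scaling, the constants of Lemma \ref{propapp2} with $k=2$ (namely $\Gamma(1)\mathrm{e}/2=\mathrm{e}/2$ and $2$) transfer directly, and the factor $1/\sigma^2$ from the rescaling produces the stated bounds.

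For the Lipschitz bounds I would first rewrite the Schoutens integral formula so that $h'$ appears explicitly. Writing $\tilde h(t) = h(t) - \mathbb{E}[h(R)]$, applying Fubini to both equivalent integral representations \eqref{chisoln}--\eqref{chisoln2} of $f(x)$ and then symmetrising, one obtains
\begin{equation*}
f(x) = -\frac{1}{\sigma^2 x\rho_R(x)}\Big[\bar{F}_R(x)\int_0^x h'(s)F_R(s)\,\mathrm{d}s + F_R(x)\int_x^\infty h'(s)\bar{F}_R(s)\,\mathrm{d}s\Big].
\end{equation*}
The bound \eqref{bound3} on $\|xf(x)\|$ follows on pulling out $\|h'\|$ and noting, via the substitution $u=x/\sigma$, that the supremum reduces to a dimensionless maximisation of an explicit combination of $\mathrm{e}^{-u^2/2}$, $1-\mathrm{e}^{-u^2/2}$ and $\mathrm{erfc}(u/\sqrt{2})$ over $u>0$, whose maximum produces the constant $2.325$. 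For \eqref{bound4} I would differentiate the symmetrised representation to express $f'(x)$ as an explicit linear functional of $h'$, and then carry out the analogous rescaling-plus-numerical-maximisation. Finally, for \eqref{bound5} I would differentiate the Stein equation to obtain
\begin{equation*}
\sigma^2 x f''(x) = h'(x) + 2xf(x) - (3\sigma^2-x^2)f'(x);
\end{equation*}
since $(3\sigma^2-x^2)$ is unbounded, I would not substitute the already-obtained bounds directly but instead view $f'$ as solving a scaled $\chi_{(3)}$-type Stein equation with inhomogeneity $h'(x)+2xf(x)$, invoke a Schoutens-type integral representation for $f'$, and reduce the bound on $xf''(x)$ to the maximum of an explicit function of $u$.

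The main obstacles are (i) justifying the integration-by-parts and Fubini manipulations, where the boundary terms at $0$ and $\infty$ vanish because of the exponential decay of $\rho_R$ together with the at-most-linear growth of $\tilde h$, but this needs checking carefully against the weaker Lipschitz assumption, and (ii) verifying the precise numerical constants $2.325$, $6.11$ and $11.30$. The suprema in (ii) do not admit closed forms, so I would proceed by careful monotonicity analysis splitting $(0,\infty)$ into sub-intervals, controlling the behaviour as $u\to\infty$ via the standard Mills-ratio estimate $\mathrm{erfc}(z)\leq \mathrm{e}^{-z^2}/(z\sqrt{\pi})$, and verifying the maxima on the remaining bounded intervals numerically.
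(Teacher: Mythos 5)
Your treatment of \eqref{bound1}--\eqref{bound2} is exactly the paper's: apply Lemma \ref{propapp2} with $k=2$, rescale, and note $\Gamma(1)\mathrm{e}/2 = \mathrm{e}/2$. For \eqref{bound3} your route (Fubini/integration-by-parts to express $f$ as a functional of $h'$, then a dimensionless numerical maximisation) is a legitimate alternative to the paper's slightly cruder but simpler tactic, which is to bound $|\tilde h(t)|\leq \|h'\|(t+\sqrt{\pi/2})$ by the mean value theorem, plug this into both representations \eqref{chisoln}--\eqref{chisoln2} to get two functions $I_1(x)/\rho_R(x)$ (increasing) and $I_2(x)/\rho_R(x)$ (decreasing), and read off the constant $2.325$ at their crossing point. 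Your Fubini'd representation is exact and would, if anything, give a tighter constant.

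Where you diverge substantially is in \eqref{bound4}--\eqref{bound5}. You correctly record the differentiated Stein equation $\sigma^2 xf''(x)+(3\sigma^2-x^2)f'(x)=h'(x)+2xf(x)$ and rightly flag that the unbounded coefficient $(3\sigma^2-x^2)$ blocks naive substitution. But you then propose to differentiate the integral representation of $f$ directly (for \eqref{bound4}) or to rebuild a Schoutens representation of $f'$ and re-maximise (for \eqref{bound5}). The paper's route is structurally cleaner and avoids all new integral computations: it observes that the differentiated equation is precisely the $\chi_{(3)}$ Stein equation for $f'$ with inhomogeneity $h'(x)+2xf(x)$, and then \emph{crucially} verifies that this inhomogeneity has mean zero under $\chi_{(3)}$ (this follows automatically because $xf''(x)+(3-x^2)f'(x)$ is a $\chi_{(3)}$ Stein operator). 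This mean-zero fact is what entitles one to apply the already-proved Lemma \ref{propapp2} with $k=3$ to $f'$, yielding immediately $\|f'\|\leq\frac{\Gamma(3/2)\mathrm{e}^{3/2}}{2(3/2)^{3/2}}\|h'+2xf\|$ and $\|xf''(x)\|\leq 2\|h'+2xf\|$, and then \eqref{bound3} finishes the job via $\|h'+2xf\|\leq(1+2\cdot 2.325)\|h'\|$. No fresh maximisations, error-function evaluations or asymptotic estimates are needed. Your plan would likely work and might even produce marginally sharper constants, but it is considerably more laborious; and without noticing the mean-zero property you would either have to re-derive a Schoutens-type solution formula from scratch or lose a factor when applying Lemma \ref{propapp2} to an un-centred test function. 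The iterative technique (from D\"obler, Gaunt and Vollmer, cited in the paper) is the key structural idea your proposal misses.
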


\begin{proof}For ease of notation, we consider the case $\sigma=1$.  The general case follows from rescaling.  Bounds (\ref{bound1}) and (\ref{bound2}) follow immediately from Lemma \ref{propapp2}. 

Now we prove inequality (\ref{bound3}).  Let $h$ be Lipschitz.  By the mean value theorem, for $t>0$, $|h(t)-\mathbb{E} [h(R)]|\leq \|h'\|(t+\mathbb{E} [R])=\|h'\|(t+\sqrt{\pi/2})$.  Therefore, for $x>0$,
\begin{align*}|xf(x)|\leq\frac{\|h'\|}{\rho_R(x)}\int_0^x\big(\sqrt{\tfrac{\pi}{2}}+t\big)\rho_R(t)\,\mathrm{d}t=:\frac{\|h'\|}{\rho_R(x)}I_1(x), 
\end{align*}
and
\begin{align*}|xf(x)|\leq\frac{\|h'\|}{\rho_R(x)}\int_x^\infty\big(\sqrt{\tfrac{\pi}{2}}+t\big)\rho_R(t)\,\mathrm{d}t=:\frac{\|h'\|}{\rho_R(x)}I_2(x).
\end{align*}
By integration by parts, the integrals $I_1(x)$ and $I_2(x)$ can  be evaluated in terms of the error function $\mathrm{erf}(x)=\frac{2}{\sqrt{\pi}}\int_0^x \mathrm{e}^{-t^2}\,\mathrm{d}t$:
\begin{align*}I_1(x)&=\sqrt{\frac{\pi}{2}}(1-\mathrm{e}^{-x^2/2})+\sqrt{\frac{\pi}{2}}\mathrm{erf}\bigg(\frac{x}{\sqrt{2}}\bigg)-x\mathrm{e}^{-x^2/2}, \\
I_2(x)&=\sqrt{\frac{\pi}{2}}(1+\mathrm{e}^{-x^2/2})-\sqrt{\frac{\pi}{2}}\mathrm{erf}\bigg(\frac{x}{\sqrt{2}}\bigg)+x\mathrm{e}^{-x^2/2}.
\end{align*}
It can be seen that $I_1(x)/\rho_R(x)$ and $I_2(x)/\rho_R(x)$ are increasing and decreasing functions of $x$, respectively, and we used \emph{Mathematica} to compute that the two functions intersect at the point $x^*=1.360722\ldots$.  Therefore, for all $x>0$,
\begin{align*}|xf(x)|\leq\frac{I_1(x^*)}{p(x^*)}\|h'\|=2.325\|h'\|.
\end{align*}

Lastly, we establish the bounds (\ref{bound4}) and (\ref{bound5}). Differentiating both sides of (\ref{chisteineqn}) and rearranging gives 
\begin{equation}\label{chi333}xf''(x)+(3-x^2)f'(x)=h'(x)+2xf(x),
\end{equation}  
which we recognise as the $\chi_{(3)}$ Stein equation with test function $h'(x)+2xf(x)$, applied to the function $f'$.  It is important to note that the test function $h'(x)+2xf(x)$ has mean zero with respect to the random variable $K_3\sim\chi_{(3)}$.  This follows because $xf''(x)+(3-x^2)f'(x)$ is a Stein operator for the $\chi_{(3)}$ distribution, meaning that $\mathbb{E}[K_3f''(Y)+(3-K_3^2)f'(K_3)]=0$, and therefore from (\ref{chi333}) we have that $\mathbb{E}[h'(K_3)+2K_3f(K_3)]=0$.  We can therefore use the iterative technique of \cite{dgv15} to deduce bounds for $\|f'\|$ and $\|xf''(x)\|$ from our bounds (\ref{bound10}) and (\ref{bound20}) with $k=3$ and (\ref{bound3}).  We have
\begin{align*}\|f'\|&\leq \frac{2\Gamma(3/2)\mathrm{e}^{3/2}}{(3/2)^{3/2}}\|h'(x)+2xf(x)\|\leq \frac{\Gamma(3/2)\mathrm{e}^{3/2}}{2(3/2)^{3/2}}\big(\|h'\|+2\|xf(x)\|\big) \\
&\leq \frac{\Gamma(3/2)\mathrm{e}^{3/2}}{2(3/2)^{3/2}}(1+2\cdot 2.325)\|h'\|=6.11\|h'\|,
\end{align*}
and
\begin{align*}\|xf''(x)\|\leq 2\|h'(x)+2xf(x)\|\leq2(1+2\cdot 2.325)\|h'\|=11.30\|h'\|,
\end{align*}
which completes the proof.
\end{proof}

\subsection*{Acknowledgements}
The author is supported by a Dame Kathleen Ollerenshaw Research Fellowship.  The author would like to thank Yvik Swan for generously sharing some results that will be added to the supplementary material of the arXiv version of his preprint \cite{es19}, which are stated in Remark \ref{yvik}. The author is grateful to the referees for their careful reading of the manuscript and for identifying several typos and errors in the displayed equations.

\footnotesize


\begin{thebibliography}{99}
\addcontentsline{toc}{section}{References}

\bibitem{ah19} Arras, B. and Houdr\'{e}, C. \emph{On Stein's method for infinitely divisible laws with finite first moment.} Springer Briefs in Probability and Mathematical Statistics. Springer, Cham, 2019.




\bibitem{berry} Berry, A. C. The Accuracy of the Gaussian Approximation to the Sum of Independent Variates. \emph{T. Am. Math. Soc.} $\mathbf{49}$ (1941), pp. 122--136.


\bibitem{bj2} Braverman, A. and Dai, J. G. High order steady-state diffusion approximation of the Erlang-C system.  arXiv:1602.02866, 2016.





\bibitem{cs05} Chen, L. H. Y. and Shao, Q.--M.  Stein's method for normal approximation. In \emph{Lect. Notes Ser. Inst. Math. Sci. Natl. Univ. Singap.} $\mathbf{4}$ (2005), pp. 1--59. Singapore Univ. Press, Singapore.


\bibitem{daly} Daly, F.  Upper bounds for Stein-type operators. \emph{Electon. J. Probab.} $\mathbf{13}$ (2008), pp. 566--587.

\bibitem{dobler beta} D\"{o}bler, C. Stein's method of exchangeable
  pairs for the beta distribution and
  generalizations. \emph{Electron. J. Probab.} $\mathbf{20}$ no$.$ 109 (2015),
  pp. 1--34.


\bibitem{dobler} D\"{o}bler, C. Distributional transformations without orthogonality relations. \emph{J. Theor. Probab.} $\mathbf{30}$ (2017), pp. 85--116.

\bibitem{dgv15} D\"{o}bler, C, Gaunt, R. E. and Vollmer, S. J.  An iterative technique for bounding derivatives of solutions of Stein equations.  \emph{Electron. J. Probab.} $\mathbf{22}$ no. 96 (2017), pp. 1--39.


\bibitem{dsw19} Duembgen, L. Samworth, R. and Wellner, J.  Bounding distributional errors via density ratios. arXiv:1905.03009, 2019.





\bibitem{ers19}Ernst, M., Reinert, G. and Swan, Y.  First order covariance inequalities via Stein's method. To appear in \emph{Bernoulli}, 2020+.

\bibitem{es19} Ernst, M. and Swan, Y. Distances between distributions via Stein's method. arXiv:1909.11518, 2019.

\bibitem{esseen}  Esseen, C. G. On the Liapunoff limit of error in the theory of probability. \emph{Ark. Mat. Astron. Fys.} (1942), Vol. A28, No. 9, pp. 1--19.

\bibitem{f18} Fathi, M. Higher-order Stein kernels for Gaussian approximation. To appear in \emph{Stud. Math.} 2020+. 

\bibitem{feller} Feller, W. On the Berry-Esseen Theorem. \emph{Z. Wahrscheinlichkeit} $\mathbf{10}$ (1968),  pp. 261--268.

\bibitem{gaunt vg} Gaunt, R. E.  Variance-Gamma approximation via Stein's method.  \emph{Electron. J. Probab.} $\mathbf{19}$ no. 38 (2014), pp. 1--33.

\bibitem{gaunt rate}  Gaunt, R. E. Rates of Convergence in Normal Approximation Under Moment Conditions Via New Bounds on Solutions of the Stein Equation.  \emph{J. Theor. Probab.} $\mathbf{29}$ (2016),  pp. 231--247.


\bibitem{gaunt ngb} Gaunt, R. E.  Products of normal, beta and gamma
  random variables: Stein operators and distributional theory. \emph{Braz. J. Probab. Stat.} $\mathbf{32}$ (2018), pp. 437--466.


\bibitem{gaunt vgii} Gaunt, R. E. Wasserstein and Kolmogorov error bounds for variance-gamma approximation via Stein's method I. \emph{J. Theor. Probab.} $\mathbf{33}$ (2020), pp. 465--505.

\bibitem{gaunt fon} Gaunt, R. E. Stein's method for functions of multivariate normal random variables. \emph{Ann. I. H. Poincare-Pr.} $\mathbf{56}$ (2020), pp. 1484--1513.

\bibitem{GMS16} Gaunt, R. E., Mijoule, G. and Swan, Y.  An algebra of Stein operators. \emph{J. Math. Anal. Appl.} $\mathbf{469}$ (2019), pp. 260--279.

\bibitem{gaunt chi square} Gaunt, R. E., Pickett, A. M. and Reinert, G.  Chi-square approximation by Stein's method with application to Pearson's statistic.  \emph{Ann. Appl. Probab.} $\mathbf{27}$ (2017), pp. 720--756.



\bibitem{goldstein} Goldstein, L. and Reinert, G.  Stein's Method and the zero bias transformation with application to simple random sampling.  \emph{Ann. Appl. Probab.} $\mathbf{7}$ (1997), pp. 935--952.

\bibitem{goldstein4} Goldstein, L. and Reinert, G.  Stein's method for
  the Beta distribution and the P\'{o}lya-Eggenberger Urn.
  \emph{J. Appl. Probab.} $\mathbf{50}$ (2013), pp. 1187--1205. 



\bibitem{ismail} Ismail, M. E. H., Lorch, L. and Muldoon, M. E. Completely monotonic functions associated with the gamma function and its q-analogues. \emph{J. Math. Anal. Appl.} $\mathbf{116}$ (1986), pp. 1--9.

\bibitem{k84} Kakosyan, A. V., Klebanov, L. B. and Melamed, I. A.  \emph{Characterization of Distributions by the Method of Intensively Monotone Operators, Lecture Notes in Math.} $\mathbf{1088}$ Springer, Berlin, 1984.

\bibitem{k97} Kalashnikov, V. \emph{Geometric Sums: Bounds for Rare Events with Applications. Risk Analysis, Reliability, Queueing.}  Kluwer Academic Publishers Group, Dordrecht, 1997.

\bibitem{kk18} Konzou, E. and Koudou, A. About the Stein equation for the generalized inverse Gaussian and Kummer distributions. \emph{ESAIM: PS} $\mathbf{24}$ (2020), pp. 607--626.

\bibitem{kkp01} Kotz, S., Kozubowski, T. J. and Podg\'{o}rski, K. \emph{The Laplace Distribution and Generalizations: A Revisit with New Applications.} Springer, 2001. 

\bibitem{lefevre} Lef\`{e}vre, C. and Utev, S.  Exact norms of a Stein-type operator and associated stochastic orderings. \emph{Probab. Theory Rel.} $\mathbf{127}$ (2003),  pp. 353--366.

\bibitem{ley} Ley, C., Reinert, G. and Swan, Y.  Stein's method for comparison of univariate distributions. \emph{Probab. Surv.} $\mathbf{14}$ (2017), pp. 1--52.

\bibitem{luk} Luk, H.  \emph{Stein's Method for the Gamma Distribution
    and Related Statistical Applications.}  PhD thesis, University of
  Southern California, 1994. 


\bibitem{olver} Olver, F. W. J., Lozier, D. W., Boisvert, R. F. and Clark, C. W.  \emph{NIST Handbook of Mathematical Functions.} Cambridge University Press, 2010.

\bibitem{pakes1} Pakes, A. G.  A characterization of gamma mixtures of stable laws motivated by limit theorems. \emph{Stat. Neerl.} $\mathbf{46}$ (1992), pp. 209--218.

\bibitem{pakes2} Pakes, A. G.  On characterizations through mixed sums. \emph{Aust. J. Stat.} $\mathbf{34}$ (1992), pp. 323--339.

\bibitem{pekoz1} Pek\"oz, E. and R\"ollin, A. New rates for exponential approximation and the theorems of R\'{e}nyi and Yaglom. \emph{Ann. Probab.} $\mathbf{39}$ (2011), pp. 587--608.

\bibitem{prr13} Pek\"oz, E., R\"ollin, A. and Ross, N.  Total variation error bounds for geometric approximation. \emph{Bernoulli} $\mathbf{19}$ (2013), pp. 610--632.


\bibitem{prr19} Pek\"oz, E., R\"ollin, A. and Ross, N. Exponential and Laplace approximation for occupation statistics of branching random walk. \emph{Electron. J. Probab.} $\mathbf{25}$ no. 55 (2020), pp. 1--22.

\bibitem{pike} Pike, J. and Ren, H. Stein's method and the Laplace distribution. \emph{ALEA Lat. Am. J. Probab. Math. Stat.} $\mathbf{11}$ (2014), pp. 571--587.

\bibitem{r98} Reinert, G. Couplings for Normal Approximations with Stein's Method. In \emph{Microsurveys in Discrete Probability}, volume of \emph{DIMACS series AMS}, (1998), pp. 193--207.

\bibitem{renyi} R\'{e}nyi, A. A characterization of Poisson processes. \emph{Magyar Tud. Akad. Mat. Kutat\'{o} Int. K\"{o}zl.} $\mathbf{1}$ (1957), pp. 519--527.


\bibitem{schoutens2} Schoutens, W. Orthogonal Polynomials in Steins Method. EURANDOM Report 99-041, EURANDOM, 1999.
  
\bibitem{schoutens} Schoutens, W.  Orthogonal polynomials in Stein's
  method.  \emph{J. Math. Anal. Appl.} $\mathbf{253}$ (2001),
  pp. 515--531. 
  
\bibitem{s10}  Shevtsova, I. An Improvement of Convergence Rate Estimates in the Lyapunov Theorem.  \emph{Dokl. Math.} $\mathbf{253}$ (2010), pp. 862--864.
  
\bibitem{s11} Shevtsova, I. On the absolute constants in the Berry Esseen type inequalities for identically distributed summands. arXiv:1111.6554, 2011.

\bibitem{stein} Stein, C.  A bound for the error in the normal approximation to the the distribution of a sum of dependent random variables.  In \emph{Proc. Sixth Berkeley Symp. Math. Statis. Prob.} (1972), vol. 2, Univ. California Press, Berkeley, pp. 583--602.

\bibitem{stein2} Stein, C.  \emph{Approximate Computation of Expectations.} IMS, Hayward, California, 1986.


\bibitem{toda} Toda, A. A. Weak limit of the geometric sum of independent but not identically distributed random variables. arXiv:1111.1786, 2011.

\end{thebibliography}
\end{document}